\theoremstyle{plain}
\newtheorem{theorem}{Theorem}
\newtheorem{corollary}{Corollary}
\newtheorem{lemma}{Lemma}
\newtheorem{proposition}{Proposition}
\theoremstyle{definition}
\newtheorem{example}{Example}
\newtheorem{remark}{Remark}
\newtheorem{assumption}{Assumption}
\numberwithin{theorem}{section}
\numberwithin{corollary}{section}
\numberwithin{lemma}{section}
\numberwithin{definition}{section}
\numberwithin{example}{section}
\numberwithin{remark}{section}
\numberwithin{proposition}{section}
\numberwithin{assumption}{section}
\definecolor{CB}{rgb}{0.1,0.5,0.1}
\definecolor{YK}{rgb}{0.1,0.2,0.7}
\definecolor{H}{rgb}{0.7,0.1,0.2}
\definecolor{MY}{rgb}{0.5,0,0.45}
\newcommand{\sign}{\textnormal{sign}}
\newcommand{\Leb}{\textnormal{Leb}}
\renewcommand{\Re}{\ensuremath{{\rm Re\,}}}
\renewcommand{\leq}{\leqslant}
\renewcommand{\geq}{\geqslant}
\newcommand{\ffi}{\varphi}
\newcommand{\eps}{\varepsilon}
\renewcommand{\Re}{\ensuremath{{\rm Re\,}}}
\renewcommand{\leq}{\leqslant}
\renewcommand{\geq}{\geqslant}
\newcommand{\eE}{\mathbb{E}}
\newcommand{\PP}{\mathbb{P}}
\def\Nat{\mathbb{N}}
\def\liml{\lim\limits}
\def\intl{\int\limits}
\def\Dom{\text{\rm Dom}}
\def\ffi{\varphi}
\def\eps{\varepsilon}
\def\cR{{\mathbb R}}
\def\cRd{{\mathbb R}^d}
\def\cC{{\mathbb C}}
\def\pd{\partial}
\begin{document}

\title[Stochastic solutions of generalized time-fractional  equations]{Stochastic solutions of generalized time-fractional evolution equations}

\author{Christian Bender}
\address{Universit\"{a}t des Saarlandes, Fachrichtung Mathematik, Postfach 15 11 50, 66041 Saarbr\"{u}cken.} 
\email{\texttt{bender@math.uni-sb.de}}

\author{Yana A. Butko}
\address{ Technische Universit\"{a}t Braunschweig,  Institut f\"{u}r Mathematische Stochastik,  Universit\"{a}tsplatz 2, 38106 Braunschweig.}
\email{\texttt{y.kinderknecht@tu-bs.de}, \texttt{yanabutko@yandex.ru}}

\date{\today}

\maketitle

\begin{abstract}
We consider a general class of integro-differential evolution equations which includes the governing equation of the generalized grey Brownian motion and the time- and space-fractional heat equation. We present a general relation between the parameters of the equation and the distribution of the underlying stochastic processes, as well as  discuss different classes of  processes providing stochastic solutions of these equations. For a subclass of evolution equations, containing Saigo-Maeda generalized time-fractional operators, we  determine the parameters of the corresponding  processes explicitly. Moreover, we explain how self-similar stochastic solutions with stationary increments can be obtained via linear fractional 
L\'evy motion for suitable pseudo-differential operators in space.

\bigskip

\noindent\textbf{Keywords:} time-fractional evolution equations, fractional calculus, randomly scaled Gaussian processes, randomly scaled L\'evy processes,  randomely slowed-down / speeded-up L\'evy processes, linear fractional L\'evy motion, generalized grey Brownian motion, inverse subordinators,  Saigo-Maeda generalized fractional operators,  Appell functions, three parameter Mittag-Leffler function,  Feynman-Kac formulae, anomalous diffusion
\end{abstract}


\section{Introduction}

Einstein's explanation of Brownian motion has provided the cornerstone which underlies deep connections between stochastic processes and evolution equations. Namely, the function  $\quad  p_t(x):=(2\pi t)^{-d/2}\exp\left(   -\frac{|x|^2}{2t}\right),\quad$ which is the probability density function (PDF) of a ($d$-dimensional) Brownian motion $(B_t)_{t\geq0}$,  is also the fundamental solution of the heat equation  $\quad\frac{\partial u}{\partial t}(t,x)=\frac12\Delta u(t,x)\quad$ with the Laplace operator $\Delta$. In other words, the heat equation is the governing equation for Brownian motion. And the solution of the Cauchy problem for the heat equation with initial data $u_0$ has the stochastic representation  
\begin{equation*}
u(t,x)=\eE[u_0(x+B_t)].
\end{equation*}
 It\^{o} calculus for Brownian motion allows to extend the above relation to a wider class of evolution equations. Using It\^{o} calculus (together with martingale theory or with theory of Markov processes and operator semigroups), one can prove (under suitable assumptions), e.g., the  Feynman--Kac formula \cite{MR2848339,MR544188}
\begin{equation*}
u(t,x):=\mathbb{E}\left[u_0(x+B_t)\,e^{\,\int_0^t b(x+B_s)\cdot d B_s-\frac12\int_0^t|b(x+B_s)|^2ds +\int_0^t c(x+B_s)ds}\right],
\end{equation*}
for the  \emph{standard diffusion equation} with drift $b$ and a ``potential''/killing term $c$
\begin{equation}\label{example-eq}
\frac{\partial u}{\partial t}(t,x)=\frac12\Delta u(t,x)+b(x)\cdot \nabla u(t,x)+ c(x) u(t,x)
\end{equation}
 as well as  (quite analogous) Feynman--Kac formulae for 
  Schr\"{o}dinger type counterparts of equation~\eqref{example-eq}, see e.g. \cite{MR1703898,ND,MR574173}. 
 
\emph{ Standard} or \emph{Brownian diffusion} is  identified by the linear growth in time of the variance and by a Gaussian shape of the displacement distribution. However, many natural phenomena show a diffusive behaviour that exibit a non-linear growth in time  of the variance and/or non-Gaussian shape of the displacement distribution; such phenomena are generally labeled as \emph{anomalous} (or, \emph{fractional})  \emph{ diffusion}. Anomalous diffusion is ubiquitously observed in many complex systems, ranging from turbulence and plasma physics to soft matter  (e.g., cell cytoplasm, membrane and nucleus) and neuro-physiological systems (see, e.g, \cite{MR1809268,MR3916448} and references therein). 
 There are many  different mathematical models describing  anomalous diffusion. And, usually, these models lead to evolution equations generalizing equation~\eqref{example-eq} by substituting partial derivatives with respect to space and/or time by some non-local integro-differential operators (in particular, operators corresponding to fractional derivatives).   

One of the earliest and most well-studied models of anomalous diffusion is based on the \emph{Continuous Time Random Walk (CTRW)} approach (see, e.g.,~\cite{MR2551283,MR884309,MR2766141,MR2884383,MR172344,MR2097999}). 
The trajectory of each diffusing particle is considered to be governed by the  PDF $p_t(x)$ (of finding the particle in position $x$ at time $t$) which solves the Montroll--Weiss equation~\cite{MR172344}. Under some assumptions on jumps and waiting times of a CTRW, one obtains  in the proper scaling limit a symmetric $\gamma$-stable L\'{e}vy process time-changed (or, subordinated) by an independent  inverse $\beta$-stable subordinator. And the Montroll--Weiss equation leads to the time- and space-fractional heat equation 
\begin{align}\label{eq:time-space-fract-heat-eq}
u(t,x)=u_0(x)+\frac{1}{\Gamma(\beta)}\int_0^t (t-s)^{\beta-1}\left(\frac{1}{2}\Delta\right)^{\gamma/2}u(s,x)ds,\quad \beta\in(0,1],\quad\gamma\in(0,2],
\end{align}
as the governing equation for this process. Here $\left(\frac{1}{2}\Delta\right)^{\gamma/2}$ is the fractional Laplacian (up to a constant), i.e. a pseudo-differential operator with symbol $-2^{-\gamma/2}|p|^{\gamma}$. And the integral operator, which is applied to $  \left(\frac{1}{2}\Delta\right)^{\gamma/2} u(\cdot,x)$,  is the Riemann-Liouville fractional integral of order $\beta$. The  equation~\eqref{eq:time-space-fract-heat-eq}  can be rewritten also in the formalism of Caputo fractional derivatives in the following way
\begin{equation*}
\pd^\beta_t u(t,x)=\left(\frac{1}{2}\Delta\right)^{\gamma/2} u(t,x),
\end{equation*}
where $\pd^\beta_t f(t):= \int_0^t\frac{(t-s)^{-\beta}}{\Gamma(1-\beta)}\frac{d f(s)}{d s}ds$ is the Caputo fractional derivative of order $\beta$.  Many authors have contributed to various generalizations of the above results, showing that Markov processes, subordinated   by independent inverse subordinators, provide stochastic (representations of) solutions of some suitable time- (and, possibly, space-) fractional evolution equations (see, e.g.,~\cite{MR1874479,MR2766141,MR3987876} and references therein).

A new direction in theoretical modelling of diffusion in complex media interpretes the anomalous character of the diffusion as a consequence  of a very heterogeneous enviroment \cite{MR3280006,PhysRevLett.113.098302,MR3903618,Jain2017,MR3916448,Sposini_2018}. 
   One type of such models is based on \emph{randomly scaled Gaussian processes} (RSGP)  and is  sometimes refered to as \emph{Generalized Grey Brownian Motion} (GGBM), or \emph{GGBM-like models}.  This type of models originates from Schneider's grey Brownian motion \cite{MR1124240,MR1190506} whose PDF is  the fundamental solution of the time-fractional heat equation of order $\beta$ (i.e., equation~\eqref{eq:time-space-fract-heat-eq} with $\gamma:=2$). The GGBM $(X^{\alpha,\beta}_t)_{t\geq0}$, $\alpha\in(0,2)$, $\beta\in(0,1]$, was introduced in works of Mainardi, Mura and their coauthors  \cite{MR2501791,MR2430462,MR2588003}, and can be realized as 
\begin{align}\label{eq:GGBM}
X^{\alpha,\beta}_t:=\sqrt{A_\beta}B^{\alpha/2}_t,
\end{align}   
 where $B^{\alpha/2}_t$ is a ($1$-dim.) fractional Brownian motion (FBM) with Hurst parameter $\alpha/2$ and $A_\beta$ is a particular nonnegative random variable\footnote{ The distribution of the random variable  $A_\beta$ has    Laplace transform $E_\beta(-\cdot)$, where    $E_\beta(z):=\sum_{n=0}^\infty\frac{z^n}{\Gamma(\beta n+1)}$ is the the Mittag--Leffler function. For $\beta\in(0,1)$, the PDF of $A_\beta$ is given by the Mainardi-Wright function   $\mathcal{M}_\beta(x):=\sum_{n=0}^\infty \frac{(-x)^n}{n!\Gamma(1-\beta-\beta n)}$. } which is independent from  $(B^{\alpha/2}_t)_{t\geq0}$. The GGBM includes Brownian motion (for $\alpha=\beta=1$), FBM (for $\alpha\neq\beta=1$) and Schneider's grey Brownian motion (for $\alpha=\beta\neq1$).  These processes are self-similar and  with stationary increments, what makes the GGBM  attractive for modeling. Generally, GGBM-like models deal with processes of the form $\sqrt{A} G_t$, or $\sqrt{A_t}G_t$, where $G_t$ is a Gaussian process and $A$ (or $A_t$) is an independent nonnegative random variable (or process) \cite{MR3586912}. The PDF of GGBM is the fundamental solution 
    for the following evolution equation (cf., e.g., \cite{MR3464056,MR2501791}): 
\begin{equation}\label{GGBM-eq}
u(t,x)=u_0(x)+\frac{\alpha}{\beta\Gamma(\beta)}\int_0^t s^{\frac{\alpha}{\beta}-1}\left(t^{\frac{\alpha}{\beta}}-s^{\frac{\alpha}{\beta}}  \right)^{\beta-1}\frac12\frac{\pd^2 u(s,x)}{\pd x^2}ds,
\end{equation}
which is the  time-stretched time-fractional heat equation.
For $\alpha:=\beta$, equation~\eqref{GGBM-eq} reduces to the time-fractional heat equation of order $\beta$. 
%
 Mathematical theory of GGBM is actively developing nowadays \cite{doi:10.1080/17442508.2019.1641093,MR3733422,MR3854539,MR3956716,MR3400947,MR3464056}. 
Moreover, another GGBM-like process of the form $\sqrt{A} G_t$ is shown to have PDF which solves the (1-dimensional) time- and space-fractional heat equation~\eqref{eq:time-space-fract-heat-eq} (see~ \cite{MR3513003}).

The fact that the time- and space-fractional heat equation~\eqref{eq:time-space-fract-heat-eq} serves as the governing equation for so different stochastic processes as a RSGP   and a stable L\'{e}vy process subordinated by an inverse stable subordinator motivetes the following questions: What is in common between these two classes of processes? What other classes of processes  can be used to solve such type of evolution equations? How general can be evolution equations allowing such types of stochastic solutions? 

The present paper  gives our answers to these questions. We consider a general class of evolution equations of the form
\begin{align}\label{eq:EvEq}
u(t,x)&=u_0(x)+\int_0^t k(t,s)Lu(s,x)ds,\qquad t>0,\quad x\in\cRd, \\
\lim_{t\searrow 0} u(t,x)&=u_0(x),\qquad  x\in\cRd,\nonumber
\end{align}
where $L$ is a pseudo-differential operator associated to a L\'evy process and 
$k(t,s)$, $0<s<t<\infty$, is a general kernel. This setting
largely extends equations~\eqref{eq:time-space-fract-heat-eq} and~\eqref{GGBM-eq}. In Section~\ref{sec:our approach}, we present a general relation between the parameters of the equation \eqref{eq:EvEq} and the distribution of any stochastic process, 
which provides a stochastic solution of Feynman-Kac type. The proof of this relation is presented in Section~\ref{sec:proof}. More precisely, we derive 
a series representation in terms of the time kernel $k$ and the symbol $-\psi$ of the pseudodifferential operator $L$ for the characteristic function of the one-dimensional marginals of any stochastic solution. We explain how this series simplifies in the important case of homogeneous kernels 
which includes the kernel $k(t,s)=(t-s)^{\beta-1}/\Gamma(\beta)$ for time-fractional evolution equations and, more generally, kernels 
corresponding to  Saigo-Maeda fractional diffintegration operators. The connection between 
Saigo-Maeda fractional diffintegration operators and positive random variables with Laplace transform given by Prabhakar's three parameter generalization of the Mittag-Leffler function is established 
in Section ~\ref{sec:special case}. The results of Section~\ref{sec:special case} yield a stochastic representation for \eqref{eq:EvEq}
with a Saigo-Maeda kernel in terms of a randomly slowed-down / speeded-up L\'evy process $(Y_{At^\beta})_{t\geq 0}$, where $Y$ is a L\'evy process with infinitesimal generator $L$, $A$ is an independent random variable
with Laplace transform given by the three-parameter Mittag-Leffler function, and $\beta$ corresponds to the degree of homogeneity of the kernel. If $Y$ has a stable distribution (e.g., in the case when $L$ is a symmetric fractional Laplacian in space), the randomly slowed-down / speeded-up L\'evy process can be replaced 
by a randomly scaled linear fractional stable motion,  providing a stochastic solution in terms of a self-similar process with stationary increments, as demonstrated in Section \ref{sec:fsm}.

\section{Integro-differential evolution equations and  stochastic processes providing stochastic representations of their solutions}\label{sec:our approach}

In this section, we state our main results on the existence of stochastic representations for generalized 
evolution equations of the form \eqref{eq:EvEq},
where $L$ is a pseudo-differential operator satisfying Assumption \ref{ass:u_0+psi} below. 
We first consider general time kernel functions $k(t,s)$, $0<s<t<\infty,$ and then specialize to the cases 
of homogeneous kernels and convolution kernels, respectively.

\subsection{General time kernels}
We first fix some notation.
Let $$S(\cRd):=\left\{\ffi\in C^\infty(\cRd)\,:\,\liml_{|x|\to\infty} x^\alpha \partial^\beta\ffi(x)=0\,\,\forall\alpha,\beta\in\Nat_0^d  \right\}$$ be the Schwartz space. We consider a continuous negative definite function\footnote{Each CNDF $\psi\,:\cRd\to\cC$ is uniquely determined by its L\'{e}vy-Khintchine representation
$$
\psi(p)=c-ip\cdot b+\frac12 p\cdot Qp+\int_{\cRd\setminus\{0\}}\left( 1-e^{ip\cdot y}+\frac{ip\cdot y}{1+|y|^2} \right)\nu(dy),
$$
where $c\geq0$, $b\in\cRd$, $Q$ is a symmetric positive semidefinite $d\times d$-matrix, $\nu$ is a measure on $\cRd\setminus\{0\}$ such that $\int_{\cRd\setminus\{0\}}\min(|y|^2,1)\nu(dy)<\infty$.  Any such quadruple $(c,b,Q,\nu)$ defines a CNDF. It follows that each CNDF $\psi$ satisfies the estimate $|\psi(p)|\leq C_\psi(1+|p|^2)$ for all $p\in\cRd$ and some $C_\psi\geq0$. 
} 
(CNDF) $\psi\,:\cRd\to\cC$ and the pseudo-differential operator $(L,S(\cRd))$  with symbol $-\psi$ in the Banach space $C_\infty(\cRd)$ of continuous functions vanishing at infinity with supremum-norm $\|\ffi\|_\infty:=\sup_{x\in\cRd}|\ffi(x)|$ (cf. Example~4.1.16 of~\cite{MR1873235}), i.e. for each $\ffi\in S(\cRd)$
\begin{align}\label{eq:L}
L\ffi(x):=\left(\mathfrak{F}^{-1}\circ (-\psi)\circ\mathfrak{F}\ffi\right)(x)\equiv-(2\pi)^{-d}\int_{\cRd}\int_{\cRd}e^{ip\cdot(x-q)} \psi(p)\ffi(q)dqdp, 
\end{align}
where $\mathfrak{F}$ is the Fourier transform such that $\mathfrak{F}\ffi(p)=(2\pi)^{-d/2}\int_{\cRd} e^{-ipx}\ffi(x)dx$. Note that the operator $(L,S(\cRd))$ is closable in the  space  $C_\infty(\cRd)$ and the closure $(L,\Dom(L))$, $\Dom(L):=\left\{\ffi\in C_\infty(\cRd)\,:\,L\ffi\in C_\infty(\cRd)\right\}$,  generates a strongly continuous semigroup on $C_\infty(\cRd)$. This semigroup (or the operator $(L,\Dom(L))$ itself) corresponds to a L\'{e}vy process $Y:=(Y_t)_{t\geq0}$ with charachteristic exponent $\psi$, i.e. $\eE\left[\exp(ip\cdot Y_t)  \right]=\exp(-t\psi(p))$ (see, e.g. \cite{MR2512800,MR1873235}).

\begin{assumption}\label{ass:u_0+psi}
We assume that the initial data  $u_0$ is an arbitrary function from $S(\cRd)$ and that the symbol $-\psi$ of the operator $L$ satisfies $\psi(0)=0$, i.e. there is no killing term (i.e. $c=0$) in the L\'{e}vy-Khintchine representation of $\psi$ (and hence an underlying L\'{e}vy process $(Y_t)_{t\geq0}$ has an infinite life time).
\end{assumption}

Concerning general kernels, we impose the following condition:

\begin{assumption}\label{ass:r}
 We consider a Borel-measurable kernel $k\,:\,(0,\infty)\times(0,\infty)\to\cR$ satisfying the following condition:  $\exists\,\alpha^*\in[0,1)$ and $\exists\,\eps>0$ such that for each $T>0$ 
\begin{align*}
K_T:=\sup\limits_{0<t\leq T}t^{\alpha^*-\frac{1}{1+\eps}}\|k(t,\cdot)\|_{L^{1+\eps}((0,t))}<\infty.
\end{align*}
\end{assumption}


Continuity of stochastic representations for \eqref{eq:EvEq} can be obtained under the following continuity assumption on the kernel: 

\begin{assumption}\label{ass:cont}
 For a.e. $s\in(0,1)$ the mapping
$$
(0,\infty)\rightarrow \mathbb{R},\quad t\mapsto k(t,ts)
$$
is continuous.
\end{assumption}


Let $(\Omega,\mathcal{F},\mathbb{P})$ be a probability space and $X:=(X_t)_{t\geq0}$ be an $\cRd$-valued stochastic process on it such that  $X_0=0$ $\,\,\PP$-almost surely.  We shall say that $X$ \emph{provides a stochastic solution} to \eqref{eq:EvEq}, 
if the function $u$ defined by
\begin{align}\label{eq:u(t,x)}
u(t,x)=\mathbb{E}\left[u_0\left( x+X_t\right)\right],\qquad x\in\cRd,\quad t\geq0,
\end{align}
is a solution to \eqref{eq:EvEq} for every $u_0\in S(\cRd)$. 

The following theorem characterizes, when such a stochastic solution exists.

\begin{theorem}\label{thm:generalTheorem}
Let Assumptions~\ref{ass:u_0+psi}, \ref{ass:r} hold. 
  For each $t\geq0$, the  function $\Phi(t,\cdot)\,:\,\cC\to\cC$ given by
\begin{align}
&\Phi(t,\lambda):=\sum_{n=0}^\infty c_n(t)\lambda^n, \label{eq:Phi}\\
&
c_0(t):=1\qquad\forall\,\,t\geq0\nonumber\qquad\text{ and}\\
&
 c_n(t):=\left\{\begin{array}{ll}
\int_0^t k(t,s)c_{n-1}(s)ds, & \quad\forall\,\,t>0,\\
0 & \quad t=0,
\end{array}\right.
\qquad n\in\Nat,\label{eq:coeff of Phi}
\end{align}
is well-defined (i.e., the integrals in the recursion formula exist) and entire.
Moreover:

\noindent (i) A stochastic solution to \eqref{eq:EvEq} exists, if and only if the function $\Phi(t,-\psi(\cdot))$ is  positive definite\footnote{By the Bochner theorem, a function $f\,:\,\cRd\to\cC$ is the Fourier transform of a bounded measure on $\cRd$ if and only if $f$ is continuous and positive definite (see, e.g.,~\cite{MR1873235}).} for all $t\geq0$.
In this case,  $(X_t)_{t\geq0}$ provides a stochastic solution to  \eqref{eq:EvEq}, if and only if 
$$
\eE\left[ e^{ip\cdot X_t} \right]=\Phi(t,-\psi(p)),\qquad p\in\cRd,\quad t\geq0.
$$

\bigskip

\noindent (ii) If the restriction of the function $\Phi(t,-\cdot)$ on $(0,\infty)$ is completely monotone\footnote{A  function $f : (0, \infty) \to [0, \infty)$ is said to be a completely monotone function  if $(-1)^n f^{(n)}(x) \geq 0$ for $x > 0$ and $n\in\Nat_0$. A function $f$ is completely monotone if and only if it is the Laplace transform of a  measure on the half-line $[0,\infty)$ (Bernstein's theorem).} for all $t\geq0$, the process $(Y_{A(t)})_{t\geq0}$ provides a stochastic solution to \eqref{eq:EvEq}, where, for each $t\geq0$, $A(t)$ is a non-negative random variable whose distribution $\mathcal{P}_{A(t)}$ has the Laplace transform given by $\Phi(t,-\cdot)$, i.e. $\int_0^\infty e^{-\lambda a}\mathcal{P}_{A(t)}(da)=\Phi(t,-\lambda)$, and $(Y_t)_{t\geq0}$ is a L\'{e}vy process with characteristic exponent $\psi$ which is independent from $(A(t))_{t\geq0}$.

\bigskip

\noindent (iii) If, additionally, the characteristic exponent $\psi$ is given by $\psi:=f\circ\widetilde{\psi}$ for some other CNDF $\widetilde{\psi}$ and some Bernstein function\footnote{A continuous function $f : (0, \infty) \to [0, \infty)$ is said to be a Bernstein function (BF) if $(-1)^n f^{(n)}(x) \leq 0$ for $x > 0$ and $n\in\Nat$. Note that a composition BF$\circ$CNDF is again a CNDF, applying the extension of Bernstein functions to the complex numbers with nonnegative real part 
as decsribed in \cite{MR2978140}, Proposition 3.5.
 } (BF) $f$, then the 
 process $(\tilde Y_{\tilde A(t)})_{t\geq0}$ provides a stochastic solution to \eqref{eq:EvEq}, where,
 for each $t\geq0$, $\widetilde{A}(t)$ is a non-negative random variable whose distribution $\mathcal{P}_{\widetilde{A}(t)}$ has the Laplace transform given by the function $\Phi(t,-f(\cdot))$, and $(\widetilde{Y}_t)_{t\geq0}$ is a L\'{e}vy process with  charachteristic exponent $\widetilde{\psi}$ which is independent from $(\widetilde{A}(t))_{t\geq0}$. 
 \end{theorem}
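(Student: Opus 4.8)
The plan is to pass to Fourier space and reduce the entire problem to a single scalar linear Volterra equation with parameter $\lambda=-\psi(p)$, whose unique solution turns out to be exactly $\Phi(t,\lambda)$. All three parts then follow from this reduction, combined with Bochner's theorem for part~(i) and with Bernstein's theorem and subordination for parts~(ii) and~(iii). The two analytic ingredients to prepare first are the entireness of $\Phi(t,\cdot)$ and the solvability/uniqueness theory of the Volterra equation, which share one and the same estimate.

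I would establish that $\Phi(t,\cdot)$ is entire by an inductive Hölder estimate. Writing $q:=(1+\eps)/\eps$ for the conjugate exponent of $1+\eps$ and $\delta:=1-\al^*>0$, I would prove on $(0,T]$ that $|c_n(t)|\le C_n\,t^{n\delta}$, where $C_n=C_{n-1}K_T\,(q(n-1)\delta+1)^{-1/q}$ and $C_0=1$. The inductive step is a single application of Hölder's inequality to $c_n(t)=\int_0^t k(t,s)c_{n-1}(s)\,ds$, using $\|k(t,\cdot)\|_{L^{1+\eps}((0,t))}\le K_T\,t^{1/(1+\eps)-\al^*}$ from Assumption~\ref{ass:r} and evaluating $\int_0^t s^{q(n-1)\delta}\,ds$ explicitly; the exponents collapse to $n\delta$ precisely because $1/(1+\eps)+1/q=1$ and $1-\al^*=\delta$. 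Stirling's formula gives $C_n\le \mathrm{const}^n/(n!)^{1/q}$, so $\sum_n C_n t^{n\delta}|\lambda|^n$ converges for every $\lambda\in\cC$ by the ratio test; hence each $c_n(t)$ is well-defined, $c_n(t)\to 0$ as $t\searrow0$ for $n\ge1$, and $\Phi(t,\cdot)$ is entire. The same bound applied to the difference of two bounded solutions shows that the Volterra equation $\phi(t)=1+\lambda\int_0^t k(t,s)\phi(s)\,ds$ has a unique bounded solution, which Picard iteration identifies as $\phi(t)=\sum_n c_n(t)\lambda^n=\Phi(t,\lambda)$.

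For part~(i) I would Fourier transform \eqref{eq:EvEq}. Since $L$ has symbol $-\psi$ and $\mathfrak F[u_0(\cdot+X_t)](p)=\eE[e^{ip\cdot X_t}]\,\mathfrak F u_0(p)$, the Ansatz $u(t,x)=\eE[u_0(x+X_t)]$ gives $\mathfrak F u(t,p)=\phi_t(p)\,\mathfrak F u_0(p)$ with $\phi_t(p):=\eE[e^{ip\cdot X_t}]$, and \eqref{eq:EvEq} becomes
\[
\phi_t(p)\,\mathfrak F u_0(p)=\mathfrak F u_0(p)-\psi(p)\int_0^t k(t,s)\,\phi_s(p)\,\mathfrak F u_0(p)\,ds .
\]
Because this holds for every $u_0\in S(\cRd)$ and, for each $p$, there is such a $u_0$ with $\mathfrak F u_0(p)\ne0$, I may cancel $\mathfrak F u_0(p)$ and obtain, for fixed $p$, the scalar Volterra equation with $\lambda=-\psi(p)$; its unique bounded solution is $\Phi(t,-\psi(p))$. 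Since $\phi_t$ is bounded by $1$ and measurable in $t$, uniqueness forces $\phi_t(p)=\Phi(t,-\psi(p))$ whenever $X$ is a solution, proving the characterization and, as $\phi_t$ is continuous and positive definite, that a solution can exist only if $\Phi(t,-\psi(\cdot))$ is positive definite. Conversely, if $\Phi(t,-\psi(\cdot))$ is positive definite then it is continuous and equals $1$ at $p=0$ (using $\psi(0)=0$ from Assumption~\ref{ass:u_0+psi}), so Bochner's theorem yields random variables $X_t$ with $\phi_t=\Phi(t,-\psi(\cdot))$ and $X_0=0$ (as $\Phi(0,\cdot)\equiv1$); reversing the computation, with $|\Phi(t,-\psi(p))|\le1$ and $\mathfrak F u_0\in S(\cRd)$ justifying Fubini and Fourier inversion, verifies \eqref{eq:EvEq} together with its initial condition.

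Parts~(ii) and~(iii) reduce to~(i) by computing characteristic functions of time-changed Lévy processes. If $\Phi(t,-\cdot)$ is completely monotone, Bernstein's theorem yields a probability measure $\mathcal P_{A(t)}$ with Laplace transform $\Phi(t,-\cdot)$, and conditioning on $A(t)$ gives $\eE[e^{ip\cdot Y_{A(t)}}]=\eE[e^{-A(t)\psi(p)}]=\Phi(t,-\psi(p))$, a genuine characteristic function, so~(i) applies. For~(iii) I would invoke the fact that a completely monotone function composed with a Bernstein function is again completely monotone, whence $\Phi(t,-f(\cdot))$ is the Laplace transform of some $\mathcal P_{\widetilde A(t)}$, and the same conditioning with $\widetilde Y$ gives $\eE[e^{ip\cdot\widetilde Y_{\widetilde A(t)}}]=\Phi(t,-f(\widetilde\psi(p)))=\Phi(t,-\psi(p))$ via $\psi=f\circ\widetilde\psi$. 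I expect the main obstacle to be the analytic-continuation bookkeeping here, rather than the probabilistic identities: $\psi(p)$ (resp.\ $\widetilde\psi(p)$) is complex, so one must evaluate the Laplace transform at the complex argument $\psi(p)$, which is legitimate because $\Re\psi\ge0$ for a CNDF (so $|e^{-A(t)\psi(p)}|\le1$ and the integral converges) and because the entire function $\Phi(t,-\cdot)$ agrees with the analytically continued Laplace transform on the closed right half-plane. Together with the careful justification of the cancellation-of-$\mathfrak F u_0$, Fubini, and inversion steps in part~(i), this is the delicate part; the entireness estimate is comparatively routine.
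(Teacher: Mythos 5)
Your proposal is correct and takes essentially the same route as the paper's own proof: the Fourier-transform reduction to a scalar Volterra equation with parameter $\lambda=-\psi(p)$ is exactly the paper's General Relation (Proposition~\ref{thm:generalRelation}), your inductive H\"older estimate is the same bound the paper uses in Lemma~\ref{lem:contraction} and Corollary~\ref{cor:corollary} to get entireness and uniqueness of bounded solutions, and parts (i)--(iii) are concluded by the identical Bochner/Bernstein and conditioning arguments. Your explicit justification for evaluating the Laplace transform at the complex argument $\psi(p)$ (via analytic continuation to the closed right half-plane) is a detail the paper passes over silently, but it does not alter the structure of the argument.
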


The proof of Theorem~\ref{thm:generalTheorem} as well as of the other results of this section will be provided in Section \ref{sec:proof}.

The following remark briefly explains how to deal with time-stretched equations.
\begin{remark}\label{rem:decoration}
Consider the following class of time-stretchings (or ``dressings'', cf. Sec.~3.3 of~\cite{MR3987876}) 
$
\mathcal{G}:=\big\{g\,:[0,\infty)\to[0,\infty)$ such that $g(\tau)>0  \,\,\forall\,\tau>0$,    $g(\tau)\nearrow\infty$ as $\tau\nearrow\infty$, $g(\tau)=\int_0^\tau \dot{g}(\theta)d\theta$  for some  $\dot{g}\in L^1_{loc}([0,\infty))$ with $\dot{g}(\tau)>0\,\,\forall\,\tau>0    \big\}$. Using the change of variables $t=g(\tau)$, we obtain the analogue of Theorem~\ref{thm:generalTheorem} for the whole class of time-stretched   equations  
\begin{align}\label{eq:decoratedEvEq}
v(\tau,x)=u_0(x)+\int_0^\tau\kappa_g(\tau,\theta) Lv(\theta,x)d\theta,\qquad\tau>0,\quad x\in\cRd,\quad g\in\mathcal{G},
\end{align}
where the kernel $\kappa_g$ is defined via
\begin{align}\label{eq: k decorated}
\kappa_g(\tau,\theta):=k(g(\tau),g(\theta))\dot{g}(\theta).
\end{align}
Obviously, a function $v$ solves evolution equation~\eqref{eq:decoratedEvEq} if and only if $v(\tau,x)=u(g(\tau),x)$, where $u$ solves the evolution equation~\eqref{eq:EvEq} with the corresponding kernel $k$. And $v(\tau,x)=\eE\left[u_0\left(x+X_{g(\tau)}\right)\right]$ solves \eqref{eq:decoratedEvEq}, if $(X_t)_{t\geq0}$ provides a stochastic solution to~\eqref{eq:EvEq}.  Note, that the kernel in the GGBM-equation~\eqref{GGBM-eq} can be obtained from the time-fractional kernel of equation~\eqref{eq:time-space-fract-heat-eq} via the time-stretching $g(\tau):=\tau^{\alpha/\beta}$.
\end{remark}

We close this subsection by establishing a sufficient condition for continuity of $\Phi$ and $u$ in both variables. 
\begin{proposition}\label{prop:cont}
 Suppose Assumptions \ref{ass:r} and \ref{ass:cont} are in force. Then $t\mapsto c_n(t)$ is continuous on $[0,\infty)$ for every $n\in \mathbb{N}$ and $\Phi$ is continuous on
$[0,\infty)\times \mathbb{C}$. If, moreover, Assumption \ref{ass:u_0+psi} holds 
and if a process $(X_t)_{t\geq 0}$ provides a stochastic solution to \eqref{eq:EvEq}, then the solution $u$ given by \eqref{eq:u(t,x)} is continuous.
\end{proposition}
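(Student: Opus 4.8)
The plan is to prove the three assertions in the order stated: first continuity of each coefficient $c_n$, then joint continuity of the series $\Phi$, and finally continuity of $u$.

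\textbf{Continuity of the coefficients.} I would argue by induction on $n$, after rewriting the recursion \eqref{eq:coeff of Phi} by the substitution $s=t\sigma$ as
\[
c_n(t)=t\int_0^1 k(t,t\sigma)\,c_{n-1}(t\sigma)\,d\sigma,\qquad t>0.
\]
The case $n=0$ is trivial. Assuming $c_{n-1}$ continuous on $[0,\infty)$, I fix $t_0>0$ and a sequence $t_m\to t_0$ (so $t_m\leq T:=t_0+1$ for large $m$) and set $h_m(\sigma):=t_m\,k(t_m,t_m\sigma)\,c_{n-1}(t_m\sigma)$. By Assumption~\ref{ass:cont} (continuity of $t\mapsto k(t,t\sigma)$ for a.e.\ $\sigma$) together with the inductive continuity of $c_{n-1}$, I get $h_m\to h_0$ for a.e.\ $\sigma\in(0,1)$. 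Since Assumption~\ref{ass:r} controls $k$ only in $L^{1+\eps}$, pointwise domination is unavailable, so I would pass to the limit by a Vitali-type argument instead: rescaling Assumption~\ref{ass:r} gives $\|k(t,t\cdot)\|_{L^{1+\eps}((0,1))}\leq K_T\,t^{-\alpha^*}$, whence
\[
\|h_m\|_{L^{1+\eps}((0,1))}\leq t_m\,\|k(t_m,t_m\cdot)\|_{L^{1+\eps}((0,1))}\,\sup_{[0,T]}|c_{n-1}|\leq K_T\,T^{1-\alpha^*}\sup_{[0,T]}|c_{n-1}|,
\]
a bound uniform in $m$. On the finite measure space $(0,1)$ a uniform $L^{1+\eps}$ bound ($\eps>0$) forces uniform integrability, so Vitali's theorem yields $c_n(t_m)=\int_0^1 h_m\to\int_0^1 h_0=c_n(t_0)$. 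Continuity at $t_0=0$ I would handle separately from the growth estimate $|c_n(t)|\leq M_n t^{n(1-\alpha^*)}$ on $[0,T]$ (obtained by induction with Hölder's inequality, using $\tfrac{1}{1+\eps}+\tfrac{\eps}{1+\eps}=1$, as in the proof of Theorem~\ref{thm:generalTheorem}), which gives $c_n(t)\to 0=c_n(0)$ for $n\geq 1$.

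\textbf{Continuity of $\Phi$.} Each partial sum $\sum_{n=0}^N c_n(t)\lambda^n$ is jointly continuous on $[0,\infty)\times\cC$. On a compact set $[0,T]\times\{|\lambda|\leq R\}$ the same growth estimate gives $\sup_{t\leq T}|c_n(t)\lambda^n|\leq M_n T^{n(1-\alpha^*)}R^n$, and $\sum_n M_n(T^{1-\alpha^*}R)^n<\infty$ because the $M_n$ carry a factorial decay $(n!)^{-\eps/(1+\eps)}$ (this is precisely what makes $\Phi(t,\cdot)$ entire in Theorem~\ref{thm:generalTheorem}). By the Weierstrass $M$-test the series converges uniformly on compacts, so $\Phi$ is jointly continuous on $[0,\infty)\times\cC$.

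\textbf{Continuity of $u$.} Under the additional Assumption~\ref{ass:u_0+psi}, if $(X_t)_{t\geq 0}$ provides a stochastic solution then Theorem~\ref{thm:generalTheorem}(i) gives $\eE[e^{ip\cdot X_t}]=\Phi(t,-\psi(p))$. Expanding $u_0$ by Fourier inversion and applying Fubini (legitimate since $\mathfrak{F}u_0\in S(\cRd)\subset L^1(\cRd)$ and $|e^{ip\cdot(x+X_t)}|=1$), I obtain
\[
u(t,x)=(2\pi)^{-d/2}\int_{\cRd}\mathfrak{F}u_0(p)\,e^{ip\cdot x}\,\Phi(t,-\psi(p))\,dp.
\]
For each fixed $p$ the integrand is continuous in $(t,x)$ by the joint continuity of $\Phi$ and the continuity of $\psi$, and it is dominated by $|\mathfrak{F}u_0(p)|\in L^1(\cRd)$, because $\Phi(t,-\psi(p))=\eE[e^{ip\cdot X_t}]$ is a characteristic function and hence bounded by $1$ uniformly in $t$ and $p$. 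Dominated convergence then delivers continuity of $u$ on $[0,\infty)\times\cRd$. The main obstacle is the uniform-integrability/Vitali step in the first part: the $L^{1+\eps}$-only control of $k$ rules out a straightforward dominated-convergence argument after the change of variables. The boundedness $|\Phi(t,-\psi(\cdot))|\leq 1$ exploited in the last part is what tames the otherwise possibly super-polynomially growing factor $\Phi(t,-\psi(p))$ against the Schwartz weight $\mathfrak{F}u_0$, so that step is comparatively routine.
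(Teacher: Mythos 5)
Your proposal is correct and follows essentially the same route as the paper's proof: induction on $n$ with the substitution $s=t\sigma$, passing to the limit via uniform integrability of the rescaled integrands (your uniform $L^{1+\eps}$ bound plus Vitali is exactly the paper's de la Vall\'ee-Poussin criterion), the Weierstrass $M$-test with the factorially decaying coefficient bounds for joint continuity of $\Phi$, and Fourier inversion plus dominated convergence with majorant $|\mathfrak{F}[u_0]|$ (using $|\Phi(t,-\psi(p))|\leq 1$ as a characteristic function) for continuity of $u$. The only cosmetic difference is that you handle continuity at $t=0$ via the growth estimate $|c_n(t)|\leq M_n t^{n(1-\alpha^*)}$, which the paper obtains by citing the same estimate from the proof of its Corollary 3.1.
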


\subsection{Homogeneous kernels}

We now explain how the general results simplify in the case of a homogeneous kernel. Recall that a kernel function $k$ is \emph{homogeneous of degree} $\beta-1$, if 
$$
k(t,ts)=t^{\beta-1}k(1,s)
$$
for every $t\in (0,\infty)$ and $s\in(0,1)$

\begin{theorem}\label{thm:hom}
 Suppose $k$ is homogeneous of degree $\beta-1$ for some $\beta>0$ and $k(1,\cdot)\in L^{1+\varepsilon}((0,1))$ for some $\varepsilon>0$. Then, Assumptions \ref{ass:r} and \ref{ass:cont} are satisfied and $\Phi$ takes the form 
 $$
 \Phi(t,\lambda)=\hat \Phi(\lambda t^\beta), \qquad t\geq 0, \quad \lambda\in \mathbb{C}, 
 $$
 where $\hat \Phi(\lambda)=\sum_{n=0}^\infty \hat c_n \lambda^n$ and 
 $$
  \hat c_n:=
\hat c_{n-1} \int_0^1 k(1,s)s^{\beta(n-1)}ds, 
\qquad n\in\Nat,\qquad \hat c_0=1.
 $$
 Additionally, suppose that Assumption \ref{ass:u_0+psi} holds and $\psi=(\tilde \psi)^\gamma$ 
 for some $\gamma\in (0,1]$ and a CNDF $\tilde \psi$, and denote by $\tilde Y$ a L\'evy process with charachteristic exponent $\tilde \psi$. 
 If $x\mapsto \hat \Phi(-x)$ is completely
 monotone on $(0,\infty)$, then there is a nonnegative random variable $\tilde A$ independent of $\tilde Y$ with Laplace transform 
 $\hat \Phi(-(\cdot)^\gamma)$  and $( \tilde Y_{\tilde A t^{\beta/\gamma}})_{t\geq 0}$ 
 provides a stochastic solution to \eqref{eq:EvEq}. 
\end{theorem}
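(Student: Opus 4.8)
The plan is to reduce Theorem~\ref{thm:hom} to the general Theorem~\ref{thm:generalTheorem} by verifying the hypotheses in the homogeneous setting and then tracking how homogeneity collapses the two-variable function $\Phi(t,\lambda)$ into the one-variable $\hat\Phi(\lambda t^\beta)$. First I would check Assumption~\ref{ass:r}: writing $k(t,s)=k(t, t\cdot(s/t))$ and using homogeneity of degree $\beta-1$, a change of variables $s=t\sigma$ in the $L^{1+\eps}$-norm gives $\|k(t,\cdot)\|_{L^{1+\eps}((0,t))}=t^{\beta-1+1/(1+\eps)}\|k(1,\cdot)\|_{L^{1+\eps}((0,1))}$, so the quantity $t^{\alpha^*-1/(1+\eps)}\|k(t,\cdot)\|_{L^{1+\eps}((0,t))}$ is a constant multiple of $t^{\alpha^*+\beta-1}$; choosing $\alpha^*:=\max\{0,1-\beta\}$ (which lies in $[0,1)$ since $\beta>0$) keeps this bounded on each $(0,T]$. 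Assumption~\ref{ass:cont} follows because $t\mapsto k(t,ts)=t^{\beta-1}k(1,s)$ is manifestly continuous on $(0,\infty)$ for each fixed $s\in(0,1)$.

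Next I would establish the product form $\Phi(t,\lambda)=\hat\Phi(\lambda t^\beta)$ by proving inductively that $c_n(t)=\hat c_n\, t^{n\beta}$. The base case $c_0\equiv1=\hat c_0 t^0$ is immediate. For the inductive step I would insert the hypothesis $c_{n-1}(s)=\hat c_{n-1}s^{(n-1)\beta}$ into the recursion \eqref{eq:coeff of Phi}, substitute $s=t\sigma$, and use homogeneity:
\[
c_n(t)=\int_0^t k(t,s)\hat c_{n-1}s^{(n-1)\beta}\,ds
=\hat c_{n-1}\int_0^1 t^{\beta-1}k(1,\sigma)\,t^{(n-1)\beta}\sigma^{(n-1)\beta}\,t\,d\sigma
=\hat c_n\,t^{n\beta},
\]
which identifies $\hat c_n=\hat c_{n-1}\int_0^1 k(1,\sigma)\sigma^{\beta(n-1)}d\sigma$ exactly as claimed and yields $\Phi(t,\lambda)=\sum_n \hat c_n(\lambda t^\beta)^n=\hat\Phi(\lambda t^\beta)$. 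The finiteness of each $\hat c_n$ follows from H\"older's inequality applied to $\int_0^1 k(1,\sigma)\sigma^{\beta(n-1)}d\sigma$ using $k(1,\cdot)\in L^{1+\eps}$ and the fact that $\sigma\mapsto\sigma^{\beta(n-1)}$ lies in the conjugate space.

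For the final existence statement I would apply parts (ii) and (iii) of Theorem~\ref{thm:generalTheorem}. The factorization $\psi=(\tilde\psi)^\gamma$ matches the form $\psi=f\circ\tilde\psi$ with the Bernstein function $f(x)=x^\gamma$ (this is a BF for $\gamma\in(0,1]$), so part~(iii) applies with $\tilde Y$ the L\'evy process of exponent $\tilde\psi$. Its conclusion requires that $\lambda\mapsto\Phi(t,-f(\lambda))=\hat\Phi(-\lambda^\gamma t^\beta)$ be completely monotone in $\lambda$ on $(0,\infty)$ for each $t$. Here is where I expect the only real subtlety: the hypothesis only gives complete monotonicity of $x\mapsto\hat\Phi(-x)$, and I must compose with $\lambda\mapsto\lambda^\gamma t^\beta$. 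Since $\lambda\mapsto\lambda^\gamma t^\beta$ is a nonnegative Bernstein function and the composition of a completely monotone function with a Bernstein function is again completely monotone, the composite $\lambda\mapsto\hat\Phi(-\lambda^\gamma t^\beta)$ is completely monotone, so part~(iii) furnishes for each $t$ a nonnegative random variable with Laplace transform $\hat\Phi(-(\cdot)^\gamma t^\beta)$.

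The remaining task is to exhibit this one-parameter family as $\tilde A t^{\beta/\gamma}$ for a single random variable $\tilde A$. I would define $\tilde A$ through its Laplace transform $\hat\Phi(-(\cdot)^\gamma)$ (the $t=1$ member, which is well-defined and completely monotone by the argument above, hence a genuine Laplace transform by Bernstein's theorem). Then the scaling identity
\[
\Ee\bigl[e^{-\lambda \tilde A t^{\beta/\gamma}}\bigr]
=\hat\Phi\bigl(-(\lambda t^{\beta/\gamma})^\gamma\bigr)
=\hat\Phi(-\lambda^\gamma t^\beta)
\]
shows that $\tilde A t^{\beta/\gamma}$ has exactly the Laplace transform required of $\tilde A(t)$ in part~(iii). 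Substituting this scaled variable into the time-change yields that $(\tilde Y_{\tilde A t^{\beta/\gamma}})_{t\ge0}$ provides a stochastic solution, completing the proof. The one-line verification of the complete-monotonicity-composition step and the clean algebraic matching of the scaling exponent $\beta/\gamma$ are the two places where care is needed; everything else is bookkeeping built on the already-proven general theorem.
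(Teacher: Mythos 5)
Your proposal is correct and follows essentially the same route as the paper's proof: the same choice $\alpha^*=\max\{0,1-\beta\}$ for Assumption~\ref{ass:r}, the same induction $c_n(t)=\hat c_n t^{n\beta}$ via the substitution $s=t\sigma$ and homogeneity, and the same final step of defining $\tilde A$ through the completely monotone function $\hat\Phi(-(\cdot)^\gamma)$ (CM composed with the Bernstein function $(\cdot)^\gamma$), verifying the scaling identity $\Ee\bigl[e^{-\lambda\tilde A t^{\beta/\gamma}}\bigr]=\hat\Phi(-\lambda^\gamma t^\beta)=\Phi(t,-\lambda^\gamma)$, and invoking Theorem~\ref{thm:generalTheorem}~(iii) with $\tilde A(t)=\tilde A t^{\beta/\gamma}$. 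The only cosmetic differences are that you additionally spell out the finiteness of $\hat c_n$ via H\"older (which the paper leaves to Corollary~\ref{cor:corollary}) and phrase the conclusion as using parts (ii) and (iii) where only (iii) is needed.
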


We again postpone the proof to Section \ref{sec:proof}, but explain how to obtain conditionally Gaussian representations for the fractional heat 
equation in time and space from this result.
\begin{example}\label{ex:fractional}
 The fractional time kernel
 $$
 k(t,s)=\frac{1}{\Gamma(\beta)} (t-s)^{\beta-1},\qquad\beta\in(0,1],
 $$
 where $\Gamma$ denotes the gamma function,
 is homogeneous of degree $\beta-1$. Since
 $$
 \int_0^1 k(1,s)s^{\beta(n-1)}ds=\frac{\Gamma((n-1)\beta+1)}{\Gamma(n\beta+1)},
 $$
 we obtain 
 $$
 \hat c_n=\frac{1}{\Gamma(n\beta+1)}
 $$
 and, thus,
 $$
 \hat \Phi(\lambda)=E_{\beta}(\lambda):=\sum_{n=0}^\infty \frac{\lambda^n}{\Gamma(n\beta+1)}
 $$
 is the Mittag-Leffler function. Hence   $\Phi(t,\lambda)=E_\beta(t^\beta\lambda)$.  In a similar way, the GGBM-kernel
 \begin{align*}
 k(t,s):=\frac{\alpha}{\beta\Gamma(\beta)}s^{\frac{\alpha}{\beta}-1}\left(t^{\frac{\alpha}{\beta}} -s^{\frac{\alpha}{\beta}} \right)^{\beta-1},\qquad\beta\in(0,1],\,\,\alpha\in(0,2),
 \end{align*}
 is homogeneous of degree $\alpha-1$ and the corresponding $ \Phi(t,\lambda)=E_{\beta}(t^{\alpha}\lambda)$  (cf.  Remark \ref{rem:decoration}). The function   $E_{\beta}(-\cdot)$ is known to be completely monotone for $\beta\in (0,1]$ since the work by Pollard \cite{MR0027375}, and we denote by $A_\beta$ a nonnegative random variable which has $E_{\beta}(-\cdot)$ as 
 Laplace tansform. Suppose $B$ is a $d$-dimensional standard Brownian motion independent of $A_\beta$. 
  By
 Theorem \ref{thm:hom} with $\gamma=1$, the solution to the time-stretched time-fractional heat equation
 \begin{align}\label{eq:againGGBM-eq}
 u(t,x)=u_0(x)+\frac{\alpha}{\beta\Gamma(\beta)}\int_{0}^t s^{\frac{\alpha}{\beta}-1}\left(t^{\frac{\alpha}{\beta}}-s^{\frac{\alpha}{\beta}}  \right)^{\beta-1}\frac12 \Delta u(s,x) ds
 \end{align}
(which is the governing equation for the GGBM)  has the stochastic representation 
\begin{align}\label{eq:stochRepr}
 \mathbb{E}[u_0(x+B_{A_\beta t^\alpha})].
 \end{align}
   Similarly, for $\beta\in(0,1]$ and $\gamma\in(0,1)$ denote by $A_\beta^{(\gamma)}$ a random variable with Laplace transform $E_\beta(-(\cdot)^\gamma)$ independent of $B$. Then, by Theorem \ref{thm:hom},
 \begin{align}\label{eq:solutionTime-spaceFracHeatEq}
 \mathbb{E}\left[u_0\big(x+B_{A_\beta^{(\gamma)} t^{\beta/\gamma}}\big)\right]
 \end{align}
 provides a stochastic representation for the time-space fractional heat equation (with symbol $\psi(p)=|p|^{2\gamma}/2^{\gamma}$)
 \begin{align}\label{eq:time-spaceFracHeatEq}
 u(t,x)=u_0(x)+\frac{1}{\Gamma(\beta)}\int_{0}^t \left(t-s  \right)^{\beta-1} \left(\frac12 \Delta\right)^\gamma u(s,x) ds.
 \end{align}
 As the distribution of a Gaussian random vector is determined by its mean and covariance function, we may replace 
 $B_{A_\beta t^\alpha}$ 
 in the stochastic representation~\eqref{eq:stochRepr} for the solution of the GGBM-equation~\eqref{eq:againGGBM-eq} with $\alpha\in(0,2)$ by a multivariate extension of generalized grey Brownian motion
 $$
 X^{\alpha,\beta}_t:=\sqrt{A_\beta } B^{\alpha/2}_t
 $$
 where $B^H$ is a $d$-dimensional fractional Brownian motion with Hurst parameter $H\in (0,1]$ and independent of $A_\beta$, thus obtaining the time-stretched time-fractional heat equation as the governing equation for generalized grey Brownian motion also in the multivariate case. We recall here that, for $d=1$, a 1-dimensional fractional Brownian motion is a centred Gaussian process with covariance structure
 $$
 \eE\left[B^H_t B^H_s\right]=\frac{1}{2}\left(t^{2H}+s^{2H}-|t-s|^{2H}\right),\quad t,s\in [0,\infty),
 $$
 and that for $d>1$ the components of $B^H$ are independent 1-dimensional fractional Brownian motions with Hurst parameter $H$.
 
 While this representation is appealing 
 from a modeling point of view, because fractional Brownian motion (and, thus, generalized grey Brownian motion) is self-similar with stationary increments, simple representations in terms of a Brownian motion such as $B_{A_\beta t^\alpha}$ or $\sqrt{A_\beta} B_{t^\alpha}$ are mathematically convenient as they allow to adopt tools from martingale theory and from the theory of Markov processes.
 
 Analogously, for representing the solution to the time-space fractional heat equation~\eqref{eq:time-spaceFracHeatEq}, we may replace  
 $B_{A_\beta^{(\gamma)} t^{\beta/\gamma}}$ in~\eqref{eq:solutionTime-spaceFracHeatEq} by $\sqrt{A_\beta^{(\gamma)}} B^{\beta/(2\gamma)}_t$ (for $\beta\leq 2\gamma$), extending the stochastic representation with stationary increments of \cite{MR3513003} beyond the univariate case, or by 
 $\sqrt{A_\beta^{(\gamma)}} B_{t^{\beta/\gamma}}$ (without any additional restrictions on the relation between $\beta$ and $\gamma$).
\end{example}

\begin{remark}
The results of the previous example will be generalized in various directions in Sections \ref{sec:special case} and \ref{sec:fsm} below. In Section \ref{sec:special case}, we consider kernels corresponding 
to Saigo-Maeda fractional diffintegration operators and demonstrate how $\Phi$ relates to Prabhakar's three parameter generalization of the Mittag-Leffler function in this case. In Section \ref{sec:fsm} we discuss 
how to obtain stochastic representations with stationary increments beyond the Gaussian case in terms of fractional stable motion.
\end{remark}

\subsection{Convolution kernels}

For convolution kernels, we can derive the following result from Theorem \ref{thm:generalTheorem}.

\begin{theorem}\label{thm:conv}
 Suppose $k(t,s)=\mathfrak{K}(t-s)$, where $\mathfrak{K}:(0,\infty)\rightarrow \mathbb{R}$ is continuous and satisfies 
 $$
 |\mathfrak{K}(t)|\leq M t^{\beta-1}e^{\gamma t},\quad t>0,
 $$
 for some constants $M,\gamma\geq 0$ and $\beta \in (0,1]$. Let $(\mathcal{L}\mathfrak{K})(\cdot)$ be the Laplace transform of $\mathfrak{K}$.  If $(A(t))_{t\geq 0}$ is a nonnegative stochastic process with a.s. RCLL paths such that  
 $$
 \int_0^\infty e^{-\sigma t} \mathbb{E}\left[e^{-\lambda A(t)}\right]dt= \frac{1}{\sigma}\frac{1}{1+\lambda (\mathcal{L}\mathfrak{K})(\sigma)}
 $$
 for every $\lambda\geq 0$ and sufficiently large $\sigma\geq \sigma_0(\lambda)$, then $\Phi(t,-\cdot)$ is CM for every $t\geq 0$. If, moreover, Assumption \ref{ass:u_0+psi} is in force, then $(Y_{A(t)})_{t\geq 0}$ provides a stochastic solution to \eqref{eq:EvEq}, where $(Y_{t})_{t\geq 0}$ is a L\'evy process with 
 charachteristic exponent $\psi$ independent of $(A(t))_{t\geq 0}$.
\end{theorem}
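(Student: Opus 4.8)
The plan is to exploit the convolution structure of $k$ to identify, via the Laplace transform in the time variable, the generating function $\Phi(t,-\lambda)$ with the moment generating function $\Ee[e^{-\lambda A(t)}]$ of the given process, and then to invoke Theorem~\ref{thm:generalTheorem}(ii). \textbf{Step 1 (assumptions).} First I would verify that $k(t,s)=\mathfrak K(t-s)$ satisfies Assumptions~\ref{ass:r} and~\ref{ass:cont}, so that Theorem~\ref{thm:generalTheorem} and Proposition~\ref{prop:cont} apply. Substituting $u=t-s$ gives $\|k(t,\cdot)\|_{L^{1+\eps}((0,t))}^{1+\eps}=\int_0^t|\mathfrak K(u)|^{1+\eps}\,du$, which by the bound $|\mathfrak K(u)|\le M u^{\beta-1}e^{\gamma u}$ is finite as soon as $(\beta-1)(1+\eps)>-1$; this holds for small $\eps>0$ since $\beta\in(0,1]$. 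A direct estimate yields $\|k(t,\cdot)\|_{L^{1+\eps}((0,t))}\lesssim t^{\beta-1+1/(1+\eps)}$ on each $(0,T]$, so choosing $\alpha^*:=1-\beta\in[0,1)$ makes $K_T<\infty$. Assumption~\ref{ass:cont} is immediate, as $t\mapsto\mathfrak K(t(1-s))$ is continuous on $(0,\infty)$ for fixed $s\in(0,1)$. Thus $\Phi$ is well-defined, entire in $\lambda$, and continuous in $t$.

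\textbf{Step 2 (Laplace transform of the coefficients).} For a convolution kernel the recursion~\eqref{eq:coeff of Phi} becomes $c_n=\mathfrak K * c_{n-1}$, hence $c_n=\mathfrak K^{*n}*\1$, and the convolution theorem gives $(\mathcal L c_n)(\sigma)=(\mathcal L\mathfrak K)(\sigma)^n/\sigma$ for large $\sigma$. Using $|c_n|\le|\mathfrak K|^{*n}*\1$ together with $(\mathcal L|\mathfrak K|)(\sigma)\le M\Gamma(\beta)/(\sigma-\gamma)^\beta$ for $\sigma>\gamma$, I obtain the bound $\int_0^\infty e^{-\sigma t}|c_n(t)|\,dt\le\sigma^{-1}\big(M\Gamma(\beta)(\sigma-\gamma)^{-\beta}\big)^n$. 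Therefore, for each $\lambda\ge0$ and all $\sigma$ large enough that $\lambda M\Gamma(\beta)(\sigma-\gamma)^{-\beta}<1$, the series $\sum_n\lambda^n\int_0^\infty e^{-\sigma t}|c_n(t)|\,dt$ converges, Fubini's theorem permits interchanging summation and integration, and summing the resulting geometric series yields
$$
\int_0^\infty e^{-\sigma t}\Phi(t,-\lambda)\,dt=\frac1\sigma\sum_{n=0}^\infty\big(-\lambda(\mathcal L\mathfrak K)(\sigma)\big)^n=\frac1\sigma\,\frac{1}{1+\lambda(\mathcal L\mathfrak K)(\sigma)}.
$$

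\textbf{Step 3 (identification and stochastic solution).} By hypothesis this equals $\int_0^\infty e^{-\sigma t}\Ee[e^{-\lambda A(t)}]\,dt$ for all $\sigma\ge\sigma_0(\lambda)$. Both $t\mapsto\Phi(t,-\lambda)$ and $t\mapsto\Ee[e^{-\lambda A(t)}]$ are right-continuous (the former by Step 1, the latter by dominated convergence along the RCLL paths of $A$), so uniqueness of the Laplace transform upgrades the a.e.-equality to $\Phi(t,-\lambda)=\Ee[e^{-\lambda A(t)}]$ for every $t\ge0$ and $\lambda\ge0$. In particular $\Phi(t,-\cdot)$ is the Laplace transform of the law of the nonnegative random variable $A(t)$, hence completely monotone for every $t\ge0$; this is the first assertion. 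Under Assumption~\ref{ass:u_0+psi} I then apply Theorem~\ref{thm:generalTheorem}(ii): since $A(t)$ has Laplace transform $\Phi(t,-\cdot)$ and $(Y_t)_{t\ge0}$ is an independent L\'evy process with characteristic exponent $\psi$, conditioning on $A(t)$ gives $\Ee[e^{ip\cdot Y_{A(t)}}]=\Ee[e^{-\psi(p)A(t)}]=\Phi(t,-\psi(p))$, where the last equality follows by analytic continuation of Step 3 to $\{\Re\lambda\ge0\}$ (legitimate since $\Re\psi\ge0$ and $\Phi(t,\cdot)$ is entire). Hence $(Y_{A(t)})_{t\ge0}$ provides a stochastic solution to~\eqref{eq:EvEq}.

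I expect the main obstacle to lie in Step~3: carefully justifying the termwise Laplace inversion through the dominating geometric series (which is precisely what forces the restriction $\sigma\ge\sigma_0(\lambda)$ appearing in the hypothesis), and then converting equality of Laplace transforms into pointwise equality of $\Phi(t,-\lambda)$ and $\Ee[e^{-\lambda A(t)}]$. It is here that the continuity of $\Phi$ from Proposition~\ref{prop:cont} and the RCLL assumption on $A$ are indispensable, since Laplace uniqueness alone only delivers equality for almost every $t$.
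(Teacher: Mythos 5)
Your proposal is correct and follows essentially the same route as the paper's proof: verify Assumption~\ref{ass:r} with $\alpha^*=1-\beta$, compute $(\mathcal{L}\Phi(\cdot,-\lambda))(\sigma)$ termwise via the convolution theorem for the $c_n$, sum the geometric series to match the hypothesized double Laplace transform, invoke Lerch's uniqueness theorem together with continuity of $\Phi$ (Proposition~\ref{prop:cont}) and right-continuity of $t\mapsto\mathbb{E}[e^{-\lambda A(t)}]$ to get the pointwise identification and hence complete monotonicity, and finish with Theorem~\ref{thm:generalTheorem}(ii). The only (inessential) difference is how you justify the Fubini interchange: you bound $\int_0^\infty e^{-\sigma t}|c_n(t)|\,dt$ directly in the Laplace domain via $|c_n|\leq|\mathfrak{K}|^{*n}*\mathds{1}$ and $(\mathcal{L}|\mathfrak{K}|)(\sigma)\leq M\Gamma(\beta)(\sigma-\gamma)^{-\beta}$, whereas the paper establishes the pointwise inductive bound $e^{-\gamma t}|c_n(t)|\leq (M\Gamma(\beta)t^\beta)^n/\Gamma(n\beta+1)$ and uses Mittag-Leffler asymptotics.
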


\begin{example}\label{rem:inverse subordinators}
(i) Suppose $k(t,s)=\mathfrak{K}(t-s)$ is as in the previous theorem and the Laplace transform of $\mathfrak{K}$ equals $1/h$ for some BF $h$.
Then, $h$ is the Laplace exponent of some L\'{e}vy subordinator $(\eta^h_t)_{t\geq0}$. The corresponding inverse subordinator $(E^h_t)_{t\geq0}$ is defined via $E^h_t:=\inf\left\{ s>0\,:\,\eta^h_s>t\right\}$.   It has been shown in~\cite{MR2442372} (formula~(3.14)) that (in the case when the L\'{e}vy measure $\nu$ of $(\eta^h_t)_{t\geq0}$ satisfies $\nu(0,\infty)=\infty$) the double Laplace  transform 
of the distribution $\mathcal{P}_{E^h_t}(da)$ with respect to both time and space variables is equal to
$$
\int_0^\infty e^{-\sigma t} \mathbb{E}\left[e^{-\lambda E^h(t)}\right]dt=\frac{h(\sigma)}{\sigma(h(\sigma)+\lambda)}= \frac{1}{\sigma}\frac{1}{1+\lambda (\mathcal{L}\mathfrak{K})(\sigma)}.
$$
Hence, the previous theorem recovers the well-known result that $\mathbb{E}[u_0(x+Y_{E^h(t)})]$ solves \eqref{eq:EvEq}, where $Y$ is a Levy process with characteristic exponent $\psi$, 
see e.g. \cite{MR2766141}.
\\[0.2cm] 
(ii) Let $\mathfrak{K}(s)=s^{\beta-1}/\Gamma(\beta)$ for some $\beta\in (0,1)$. We, hence, again consider the time-fractional evolution equation. Then,
$$
(\mathcal{L}\mathfrak{K})(\sigma)=\frac{1}{\sigma^\beta},\qquad \sigma>0,
$$
where $h(\sigma):=\sigma^\beta$ is the Laplace exponent of the $\beta$-stable subordinator, recovering the representation for the solution of time-fractional evolution equations in terms of a 
L\'evy process time-changed by an inverse stable subordinator, see e.g. \cite{MR1874479}. We have seen in Example~\ref{ex:fractional} 
that in this situation the time change can alternatively be done by $A(t)=A_\beta t^\beta$. This can also be verified by Theorem \ref{thm:conv}, because for $\sigma>0$ and $\lambda\geq 0$,
$$
\int_0^\infty e^{-\sigma t} \mathbb{E}[e^{-\lambda A_\beta t^\beta}]dt= \int_0^\infty e^{-\sigma t} E_\beta(-\lambda t^\beta)dt=\frac{\sigma^{\beta-1}}{\sigma^\beta+\lambda}=\frac{1}{\sigma}\frac{1}{1+\lambda (\mathcal{L}\mathfrak{K})(\sigma)},
$$
e.g. by Eq. (7.1) in \cite{MR2800586}.
\end{example}

\begin{remark}
Inverse subordinators are actively used to produce stochastic representations for solutions of evolution equations of the form~\eqref{eq:EvEq} with convolution kernels  also in the case when the generator of a L\'{e}vy process $(L,\Dom(L))$ is substituted by an arbitrary generator of a strongly continuous semigroup (see, e.g.~\cite{MR1874479,MR2766141,MR3987876} as well as works of  other authors). The statement~(iii) of Theorem~\ref{thm:generalTheorem}  also can be generalized to this case applying different tools. This topic will be presented in our next paper.
\end{remark}


\section{Proofs for Section~\ref{sec:our approach}}\label{sec:proof}

\subsection{Proof of Theorem \ref{thm:generalTheorem}}

The proof of Theorem \ref{thm:generalTheorem} requires several auxiliary results.
We first connect the characteristic function of the 1-dimensional marginals of any process $X$, which provides a stochastic solution to \eqref{eq:EvEq}, to a family of Volterra equations of second kind.

\begin{proposition}[General Relation]\label{thm:generalRelation}
Let Assumptions~\ref{ass:u_0+psi},~\ref{ass:r} hold. Then $(X_t)_{t\geq 0}$ provides a stochastic solution to the evolution equation~\eqref{eq:EvEq} if and only if
\begin{align}\label{eq:generalRelation}
1-\ffi_{X_t}(p)=\psi(p)\int_0^t k(t,s)\ffi_{X_s}(p)ds,\qquad\forall\,\, p\in\cRd,\quad\forall\,\,t>0,
\end{align}
where $\ffi_{X_t}(p):=\eE\left[  e^{ip\cdot X_t}\right]$ is the characteristic function of $X_t$,  $t>0$.
\end{proposition}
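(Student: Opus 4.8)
The plan is to push the evolution equation \eqref{eq:EvEq} to Fourier space in the $x$-variable, where the pseudo-differential operator $L$ becomes multiplication by its symbol $-\psi$. Since the equation is required to hold for \emph{every} $u_0\in S(\cRd)$ and the Fourier transform is injective, testing against a rich enough family of $u_0$ strips off the common factor $\mathfrak{F}u_0$ and leaves precisely the scalar Volterra relation \eqref{eq:generalRelation} at each frequency $p$. I would treat both implications simultaneously: after identifying the Fourier transforms of both sides of \eqref{eq:EvEq}, the statement ``$u(t,x)=\eE[u_0(x+X_t)]$ solves \eqref{eq:EvEq} for all $u_0\in S(\cRd)$'' is equivalent to ``$(1-\ffi_{X_t}(p))\,\mathfrak{F}u_0(p)=\psi(p)\big(\int_0^t k(t,s)\ffi_{X_s}(p)\,ds\big)\mathfrak{F}u_0(p)$ for all $u_0\in S(\cRd)$'', and, for each fixed $p$, choosing $u_0$ with $\mathfrak{F}u_0(p)\neq 0$ turns this into \eqref{eq:generalRelation}; conversely, multiplying \eqref{eq:generalRelation} by $\mathfrak{F}u_0(p)$ and inverting recovers the equation.

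First I would record the two Fourier identities. Using $\mathfrak{F}(u_0(\cdot+y))(p)=e^{ip\cdot y}\mathfrak{F}u_0(p)$ and Fubini (legitimate since $\int_{\cRd}\eE[|u_0(x+X_t)|]\,dx=\|u_0\|_{L^1}<\infty$, so $u(t,\cdot)\in L^1(\cRd)$), I obtain $\mathfrak{F}u(t,\cdot)(p)=\ffi_{X_t}(p)\,\mathfrak{F}u_0(p)$. The more delicate identity is $\mathfrak{F}(Lu(s,\cdot))(p)=-\psi(p)\,\ffi_{X_s}(p)\,\mathfrak{F}u_0(p)$, together with the fact that $u(s,\cdot)$ actually lies in $\Dom(L)$. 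Here I would avoid manipulating the generally nonsmooth characteristic function $\ffi_{X_s}$ directly and instead exploit that $L$ is translation invariant and closed: for each fixed $y$ one has $u_0(\cdot+y)\in S(\cRd)$ and $L(u_0(\cdot+y))=(Lu_0)(\cdot+y)$ by \eqref{eq:L}, with $Lu_0\in C_\infty(\cRd)$ since $|\psi(p)|\leq C_\psi(1+|p|^2)$. Writing $u(s,\cdot)=\int_{\cRd}u_0(\cdot+y)\,\mathcal{P}_{X_s}(dy)$ as a Bochner integral in $C_\infty(\cRd)$ and noting that $y\mapsto u_0(\cdot+y)$ and $y\mapsto (Lu_0)(\cdot+y)$ are continuous bounded $C_\infty(\cRd)$-valued maps, closedness of $(L,\Dom(L))$ lets me pull $L$ through the integral, giving $u(s,\cdot)\in\Dom(L)$ and $Lu(s,\cdot)=\eE[(Lu_0)(\cdot+X_s)]$; one further Fubini then yields the claimed multiplier identity.

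With these in hand I would apply $\mathfrak{F}$ to \eqref{eq:EvEq} and interchange it with the time integral. The integral $\int_0^t k(t,s)Lu(s,\cdot)\,ds$ is a well-defined $C_\infty(\cRd)$-valued Bochner integral because $\|Lu(s,\cdot)\|_\infty\leq\|Lu_0\|_\infty$ uniformly in $s$ and, by H\"older and Assumption \ref{ass:r}, $\int_0^t|k(t,s)|\,ds\leq K_T\,t^{1-\alpha^*}<\infty$ for $t\leq T$; since $\mathfrak{F}$ is continuous into the tempered distributions, it commutes with the Bochner integral and produces $-\psi(p)\big(\int_0^t k(t,s)\ffi_{X_s}(p)\,ds\big)\mathfrak{F}u_0(p)$ as the transform of the integral term. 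Equating Fourier transforms and arguing as in the first paragraph gives \eqref{eq:generalRelation}. For the ``if'' direction I would additionally check the initial condition: the same estimate combined with \eqref{eq:generalRelation} shows $|1-\ffi_{X_t}(p)|\leq|\psi(p)|\,K_T\,t^{1-\alpha^*}\to 0$ as $t\searrow 0$ (this is where $\alpha^*<1$ is used), so $\ffi_{X_t}(p)\to 1$ for every $p$, whence $X_t\to 0$ in distribution by the L\'evy continuity theorem and $u(t,x)\to u_0(x)$ by bounded convergence; the assumption $\psi(0)=0$ ensures consistency at $p=0$, where $\ffi_{X_t}(0)=1$.

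The main obstacle I anticipate is exactly the justification that $Lu(s,\cdot)$ is well-defined and has the stated Fourier-multiplier form, because $u(s,\cdot)$ is in general only $L^1\cap C_\infty$ and not Schwartz (its transform $\ffi_{X_s}\mathfrak{F}u_0$ inherits no smoothness from $\ffi_{X_s}$), so the defining formula \eqref{eq:L} does not apply verbatim. The translation-invariance-plus-closedness argument above is designed precisely to circumvent this. The remaining care is bookkeeping: measurability of $s\mapsto\ffi_{X_s}(p)$ and of $s\mapsto Lu(s,\cdot)$ (which follows from joint measurability of $X$), and the repeated Fubini and Bochner interchanges, all of which are controlled quantitatively by Assumption \ref{ass:r}.
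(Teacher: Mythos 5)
Your proposal is correct and follows the same overall skeleton as the paper's proof (pass to Fourier space in $x$, use $\mathfrak{F}[u(t,\cdot)]=\ffi_{X_t}\mathfrak{F}[u_0]$, and strip off the arbitrary factor $\mathfrak{F}[u_0]$ to get the scalar Volterra relation at each frequency), but you handle the one genuinely delicate step --- making sense of $Lu(s,\cdot)$ and computing its Fourier transform --- by a different mechanism. The paper stays entirely on the Fourier side: from $|\ffi_{X_s}|\leq 1$ it gets $|\mathfrak{F}[u(s,\cdot)]|\leq|\mathfrak{F}[u_0]|$, so $\mathfrak{F}[u(s,\cdot)]$ decays as fast as a Schwartz function; combined with the quadratic bound $|\psi(p)|\leq C_\psi(1+|p|^2)$ this puts $\psi\,\mathfrak{F}[u(s,\cdot)]$ in $L^1(\cRd)\cap L^\infty(\cRd)$, so $L$ acts directly as the Fourier multiplier and $u(s,\cdot)\in\Dom(L)$ by the paper's (maximal) definition of $\Dom(L)$. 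You instead work on the physical side: translation invariance of $L$, the Bochner-integral representation $u(s,\cdot)=\int u_0(\cdot+y)\,\mathcal{P}_{X_s}(dy)$, and closedness of the generator (Hille's theorem) to conclude $u(s,\cdot)\in\Dom(L)$ with $Lu(s,\cdot)=\eE[(Lu_0)(\cdot+X_s)]$. Both are valid; the paper's route is shorter because its definition of $\Dom(L)$ makes the decay estimate do all the work, while your route yields the structurally appealing identity $Lu(s,\cdot)=\eE[(Lu_0)(\cdot+X_s)]$ and is the kind of argument that survives when the generator is not presented as a multiplier (though you still use translation invariance essentially). Two further remarks: (1) your closing Fubini, $\mathfrak{F}\big[\eE[(Lu_0)(\cdot+X_s)]\big]=\ffi_{X_s}\mathfrak{F}[Lu_0]$, needs $Lu_0\in L^1(\cRd)$, which is true but not free (one verifies it from the L\'evy--Khintchine integro-differential form of $L$); alternatively the distributional pairing you sketch, using only $\mathfrak{F}[Lu_0]=-\psi\mathfrak{F}[u_0]\in L^1(\cRd)$, closes this cleanly, so it is a point of care rather than a gap; (2) you explicitly verify the initial condition $\lim_{t\searrow0}u(t,x)=u_0(x)$ from \eqref{eq:generalRelation} via $|1-\ffi_{X_t}(p)|\leq|\psi(p)|K_T t^{1-\alpha^*}$, a detail the paper's proof of this proposition leaves implicit (it surfaces only in its Corollary on $\Phi$), so your write-up is, if anything, slightly more complete on that point.
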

\begin{remark}
The General Relation~\eqref{eq:generalRelation} provides an interrelation between the parameters of the equation $\psi$, $k$ and the stochastic process $X$.
\end{remark}

\begin{remark}\label{ass:psi}
If relation~\eqref{eq:generalRelation} holds for each $p\in\cRd$ then it holds in particular for $p=0$. Since $\ffi_{X_t}(0)=1$ for each $t\geq0$, the right hand side of~\eqref{eq:generalRelation} must be zero at $p=0$ for each $t\geq0$. It is possible only if   $\psi(0)=0$, i.e. there is no killing term in the L\'{e}vy-Khintchine representation of $\psi$ (and an underlying L\'{e}vy process has an infinite life time).
\end{remark}

\begin{proof}[Proof of Proposition~\ref{thm:generalRelation}]
Let $u(t,x)$ be given by~\eqref{eq:u(t,x)}.   Then  $\|u(t,\cdot)\|_\infty\leq\|u_0\|_\infty$ for each $t\geq0$. Moreover, 
\begin{align*}
\eE\left[\int_{\cRd}|u_0(x+X_t)|dx  \right]=\eE\left[\int_{\cRd}|u_0(y)|dy  \right]=\|u_0\|_{L^1(\cRd)}<\infty
\end{align*}
since $u_0\in S(\cRd)\subset L^1(\cRd)$. Therefore, we have by the Fubini theorem
\begin{align*}
\int_{\cRd}\!|u(t,x)|dx\leq \int_{\cRd}\!\eE\left[|u_0(x+X_t) | \right]  dx=\eE\left[\int_{\cRd}\!|u_0(x+X_t)|dx  \right]=\|u_0\|_{L^1(\cRd)}<\infty,
\end{align*}
i.e. $u(t,\cdot)\in L^1(\cRd)$ for all $t\geq0$ and $\|u(t,\cdot)\|_{L^1(\cRd)}\leq\|u_0\|_{L^1(\cRd)}$. Hence we can apply Fourier transform to $u(t,\cdot)$ with respect to the space variable. And we have by the Fubini theorem:
\begin{align}\label{eq:Fourier-1}
\mathfrak{F}\left[u(t,\cdot)  \right](p)&=(2\pi)^{-d/2}\int_{\cRd}e^{-ix\cdot p}\eE\left[u_0(x+X_t)  \right]dx\nonumber\\
&
=\eE\left[(2\pi)^{-d/2}\int_{\cRd}e^{-i(y-X_t)\cdot p}u_0(y)dy  \right]=\mathfrak{F}[u_0](p)\ffi_{X_t}(p).
\end{align}
Note that since $u(t,\cdot)\in L^1(\cRd)$, we have $\mathfrak{F}\left[u(t,\cdot)  \right]\in C_\infty(\cRd)$ for all $t\geq0$. 
   Since $|\ffi_{X_t}(p)|\leq 1$ for all $p\in\cRd$, $t\geq0$, we have
\begin{align}\label{eq:Fourier-estimate}
\left| \mathfrak{F}[u(t,\cdot)](p)  \right|\leq \left| \mathfrak{F}[u_0](p)  \right|\qquad\forall\,\,p\in\cRd,\quad t\geq0.
\end{align}
Since $u_0\in S(\cRd)$ then also $\mathfrak{F}[u_0]\in S(\cRd)$. Hence $\mathfrak{F}[u(t,\cdot)]$ is a bounded function which decays as fast as $\mathfrak{F}[u_0]$ when $|p|\to\infty$ by~\eqref{eq:Fourier-estimate}. Therefore, $u(t,\cdot)$ is a smooth function for each $t\geq0$ by the properties of the Fourier transform. Moreover  $\lim_{|x|\to\infty}u(t,x)=0$  for all $t\geq0$ by the Lebesgue theorem on dominated convergence. Hence $u(t,\cdot)\in C_\infty(´\cRd)$.  Further, again by~\eqref{eq:Fourier-estimate}, we have   $-\psi\mathfrak{F}[u(t,\cdot)]\in L^1(\cRd)$ and is a bounded function too, since a symbols grows at most quadratically. Hence $\mathfrak{F}^{-1}\circ \psi\circ\mathfrak{F}[u(t,\cdot)] \in C_\infty(\cRd)$ by properties of the Fourier transform.  So, $u(t,\cdot)\in\Dom(L)$ for all $t\geq0$.

The integral $\int_0^t k(t,s)\ffi_{X_t}(p) ds$ is  finite for all $t\geq0$ due to Assumption~\ref{ass:r} since $|\ffi_{X_t}(p)|\leq 1$ for all $p\in\cRd$ and $t\geq0$.  Analogously,  the integrals $\int_0^t k(t,s)Lu(s,x)ds$ and $\int_0^t k(t,s)\psi(p)\mathfrak{F}[u(s,\cdot)](p)\,ds$ are well-defined  and finite for all $t\geq0$ by Assumption~\ref{ass:r}, equality~\eqref{eq:Fourier-1} and estimate~\eqref{eq:Fourier-estimate} since $-\psi \mathfrak{F}[u_0]\in L^1(\cRd)$ and is a bounded function.   Therefore, both sides of equation~\eqref{eq:EvEq} make sense for $u$ given by~\eqref{eq:u(t,x)}.  Further, it holds by Fubini theorem (since $\left|\psi\mathfrak{F}[u(t,\cdot)]\right|$ is bounded and decays fast at infinity) and by~\eqref{eq:Fourier-1}
\begin{align*}
&\int_0^t k(t,s)Lu(s,x)ds=\int_0^t k(t,s)\mathfrak{F}^{-1}\left[(-\psi)\mathfrak{F}[u(s,\cdot)]\right](x)ds\\
&
=\mathfrak{F}^{-1}\left[\int_0^t k(t,s)(-\psi)\mathfrak{F}[u(s,\cdot)]ds\right](x)=\mathfrak{F}^{-1}\left[(-\psi)\mathfrak{F}[u_0]\int_0^t k(t,s)\ffi_{X_s}ds\right](x).
\end{align*}

Assume now that $X$ is such that $u$ given by~\eqref{eq:u(t,x)} solves equation~\eqref{eq:EvEq}.  Applying Fourier tranform to both sides of equation~\eqref{eq:EvEq} we obtain by~\eqref{eq:Fourier-1} for $p\in\cRd$, $t\geq0$
\begin{align}\label{eq:just eq}
\mathfrak{F}[u_0](p)\ffi_{X_t}(p)  =\mathfrak{F}[u_0](p)-\psi(p)\mathfrak{F}[u_0](p)\int_0^t k(t,s)\ffi_{X_s}(p)ds.
\end{align}
Since $u_0$ can be arbitrary function from $S(\cRd)$, the above equality~\eqref{eq:just eq} is equivalent to~\eqref{eq:generalRelation}.  Vice versa, if $X$ is such that $\varphi_{X_t}$  solves equation~\eqref{eq:generalRelation}, then  equation~\eqref{eq:just eq} holds for any $u_0\in S(\cRd)$. And hence $u$ given by~\eqref{eq:u(t,x)} solves equation~\eqref{eq:EvEq}.
\end{proof}

We now discuss the family of Volterra equations in the General Relation 
on the space $B_b([0,T],\cC)$ of bounded Borel-measurable complex-valued functions defined on the segment $[0,T]$, $T>0$. It is a Banach space with the supremum-norm $\|\cdot\|_\infty$.

\begin{lemma}\label{lem:contraction}
Let Assumption~\ref{ass:r} hold.  Then for any $\lambda\in\cC$ and any $T>0$ there exists $n_T\in\Nat$ such that the $n_T$-th power of the  operator $\mathcal{R}_\lambda\,:\,B_b([0,T],\cC)\to B_b([0,T],\cC)$,
\begin{align}\label{eq: operator R-lambda}
\left(\mathcal{R}_\lambda g\right)(t):=
\left\{\begin{array}{ll}
1-\lambda\int_0^t k(t,s)g(s)ds, & t\in(0,T],\\
1, & t=0,
\end{array}\right.\qquad g\in B_b([0,T],\cC).
\end{align}
is a strict contraction.
\end{lemma}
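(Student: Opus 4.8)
The plan is to show that iterating the operator $\mathcal{R}_\lambda$ eventually produces a strict contraction by controlling the kernel of the iterated Volterra operator. The key observation is that $\mathcal{R}_\lambda$ is affine: writing $V_\lambda g(t):=\lambda\int_0^t k(t,s)g(s)\,ds$ for its linear part, one has $\mathcal{R}_\lambda g = \1 - V_\lambda g$ (with the convention at $t=0$), so that for any two functions $g,h\in B_b([0,T],\cC)$ the difference $\mathcal{R}_\lambda g - \mathcal{R}_\lambda h = -V_\lambda(g-h)$, and more generally $\mathcal{R}_\lambda^{n}g-\mathcal{R}_\lambda^{n}h=(-1)^n V_\lambda^{n}(g-h)$. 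Thus it suffices to prove that $\|V_\lambda^{n_T}\|_{B_b([0,T],\cC)\to B_b([0,T],\cC)}<1$ for some $n_T\in\Nat$; equivalently, that the operator norm of $V_\lambda^n$ tends to $0$ as $n\to\infty$.

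First I would establish a pointwise bound on the iterated operator. For $g$ with $\|g\|_\infty\le 1$, Assumption~\ref{ass:r} with H\"older's inequality gives, for $0<t\le T$,
\begin{align*}
|V_\lambda g(t)|\le |\lambda|\,\|k(t,\cdot)\|_{L^{1+\eps}((0,t))}\,\|g\|_{L^{(1+\eps)/\eps}((0,t))}\le |\lambda|\,K_T\, t^{\frac{1}{1+\eps}-\alpha^*}\, t^{\frac{\eps}{1+\eps}}\,\|g\|_\infty,
\end{align*}
so that $|V_\lambda g(t)|\le |\lambda| K_T\, t^{1-\alpha^*}\|g\|_\infty$. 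The exponent $1-\alpha^*>0$ by Assumption~\ref{ass:r}, which is exactly what will drive the decay. The main technical step is then to iterate this estimate and track the accumulation of the power of $t$. I would prove by induction that there is a constant depending only on $K_T,\alpha^*,\eps$ and on $|\lambda|$ such that
\begin{align*}
|V_\lambda^n g(t)|\le C^n\,\frac{\Gamma(1-\alpha^*)^n}{\Gamma(n(1-\alpha^*)+1)}\,t^{n(1-\alpha^*)}\,\|g\|_\infty,
\end{align*}
the Gamma-function factors arising from the repeated Beta-integrals $\int_0^t (t-s)^{\cdots}s^{\cdots}ds$ once the H\"older estimate is applied at each stage. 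The hard part will be carrying the $L^{1+\eps}$-integrability cleanly through the induction: at each iteration one must again split off an $L^{1+\eps}$-norm of $k(t,\cdot)$ via H\"older and bound the remaining $L^{(1+\eps)/\eps}$-norm of the previous iterate $V_\lambda^{n-1}g$, which is where the factor $t^{n(1-\alpha^*)}$ and the Gamma-growth must be reconciled with the conjugate exponent. This requires an auxiliary Beta-type integral estimate valid for the singular kernel, rather than a naive supremum bound.

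Granting the inductive estimate, the conclusion is immediate: taking the supremum over $t\in[0,T]$ gives
\begin{align*}
\|V_\lambda^n\|_{op}\le C^n\,\frac{\Gamma(1-\alpha^*)^n}{\Gamma(n(1-\alpha^*)+1)}\,T^{n(1-\alpha^*)},
\end{align*}
and since $1-\alpha^*>0$ the Gamma function in the denominator grows superexponentially in $n$ (by Stirling), so the right-hand side tends to $0$ as $n\to\infty$. Hence there exists $n_T\in\Nat$ with $\|V_\lambda^{n_T}\|_{op}<1$, and by the affine identity $\mathcal{R}_\lambda^{n_T}g-\mathcal{R}_\lambda^{n_T}h=(-1)^{n_T}V_\lambda^{n_T}(g-h)$ this shows $\mathcal{R}_\lambda^{n_T}$ is a strict contraction on $B_b([0,T],\cC)$, as claimed. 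I would also remark that $V_\lambda$ indeed maps $B_b([0,T],\cC)$ into itself, so that all iterates are well-defined bounded Borel functions: boundedness follows from the pointwise estimate above, and Borel-measurability of $t\mapsto V_\lambda g(t)$ from Fubini/Tonelli applied to the jointly measurable kernel $k$.
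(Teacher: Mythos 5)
Your overall strategy is the same as the paper's: reduce to the linear Volterra part (the paper does this implicitly by always estimating differences $\mathcal{R}^n_\lambda g-\mathcal{R}^n_\lambda f$), iterate a H\"older estimate based on Assumption~\ref{ass:r} while tracking the accumulating power $t^{n(1-\alpha^*)}$, and conclude from superexponential decay of the resulting constants. Your one-step bound $|V_\lambda g(t)|\leq |\lambda| K_T t^{1-\alpha^*}\|g\|_\infty$ is exactly the paper's estimate~\eqref{eq:estimate k} with $n=1$, and the affine identity $\mathcal{R}^n_\lambda g-\mathcal{R}^n_\lambda h=(-1)^n V^n_\lambda(g-h)$ is a clean way of organizing the same computation.

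The genuine flaw is the claimed form of the inductive bound. You assert that the $n$-step constant is $C^n\,\Gamma(1-\alpha^*)^n/\Gamma(n(1-\alpha^*)+1)$, ``arising from repeated Beta-integrals.'' But under Assumption~\ref{ass:r} the kernel has no convolution or fractional structure: there is no factor $(t-s)^{\beta-1}$ to pair against $s^{(n-1)(1-\alpha^*)}$, so no Beta integral ever appears; that mechanism is special to kernels like $k(t,s)=(t-s)^{\beta-1}/\Gamma(\beta)$. What the iteration actually produces --- by precisely the mechanism you describe, namely splitting off $\|k(t,\cdot)\|_{L^{1+\eps}((0,t))}\leq K_T t^{\frac{1}{1+\eps}-\alpha^*}$ by H\"older and integrating the power of $s$ in the conjugate norm --- is the stage-$n$ factor $\big((n-1)(1-\alpha^*)(1+1/\eps)+1\big)^{-\eps/(1+\eps)}$, hence the constant
\begin{align*}
(|\lambda|K_T)^n\prod_{l=1}^{n-1}\big(l(1-\alpha^*)(1+1/\eps)+1\big)^{-\eps/(1+\eps)},
\end{align*}
which decays like a negative power of $(n-1)!$. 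This is strictly slower than your claimed rate $1/\Gamma(n(1-\alpha^*)+1)$ whenever $1-\alpha^*>\eps/(1+\eps)$ (e.g. $\alpha^*=0$ and $\eps$ small), so your bound is not justified by the steps you outline and there is no apparent route to it for a general kernel. The saving grace is that the conclusion only requires \emph{some} superexponential decay, which the product above provides. With the constant corrected in this way, your argument becomes exactly the paper's proof, which carries out the same bookkeeping in the form of weighted suprema $\sup_{0<t\leq T}|s^{(\alpha^*-1)n}(\cdots)|$ and arrives at the product bound~\eqref{eq:estimate 2}.
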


\begin{proof}
Let us fix $\lambda\in\cC$ and $T>0$.  Due to Assumption~\ref{ass:r}, it holds for any $g\in B_b([0,T],\cC)$,
$n\in \mathbb{N}$, and any $t\in(0,T]$
\begin{align}\label{eq:estimate k}
& t^{(\alpha^*-1)n}\left| \int_0^tk(t,s)g(s)ds  \right|\nonumber \\ \leq& 
\left(\sup_{0<s\leq T} |s^{(\alpha^*-1)(n-1)}g(s)|\right)
 t^{(\alpha^*-1)n} \int_0^t|k(t,s)|s^{(1-\alpha^*)(n-1)}ds\nonumber \\ \leq &   \left(\sup_{0<s\leq T} |s^{(\alpha^*-1)(n-1)}g(s)|\right)
\frac{\|k(t,\cdot)\|_{L^{1+\eps}([0,t])}t^{\alpha^*-1}t^{\eps/(1+\eps)}}{((1-\alpha^*)(n-1)(1+1/\eps)+1)^{\eps/(1+\eps)}} \nonumber \\
\leq & \frac{K_T}{((1-\alpha^*)(n-1)(1+1/\eps)+1)^{\eps/(1+\eps)}}  \left(\sup_{0<s\leq T} |s^{(\alpha^*-1)(n-1)}g(s)|\right).
\end{align}
Choosing $n=1$, we have $\| \mathcal{R}_\lambda g\|_\infty\leq 1+ K_T|\lambda| T^{1-\alpha^*}\|g\|_\infty<\infty$, i.e. the operator $\mathcal{R}_\lambda$ maps $B_b([0,T],\cC)$ into itself.
 Since for all $f$, $g\in B_b([0,T],\cC)$ and all $t\in[0,T]$
\begin{align*}
\left| \mathcal{R}_\lambda g(t)-\mathcal{R}_\lambda f(t)\right|\leq|\lambda| K_T T^{1-\alpha^*}\|g-f\|_\infty,
\end{align*}
  $\mathcal{R}_\lambda$ is a continuous operator on $B_b([0,T],\cC)$. Further, for each $n\in\Nat$ and $t\in(0,T]$, we obtain
  due to \eqref{eq:estimate k}
 \begin{align*}
 & t^{(\alpha^*-1)(n+1)} \left| \mathcal{R}^{n+1}_\lambda g(t)-\mathcal{R}^{n+1}_\lambda f(t)\right|
 \\ =& t^{(\alpha^*-1)(n+1)} |\lambda| \left|\int_0^t k(t,s) (\mathcal{R}^{n}_\lambda g(s)-\mathcal{R}^{n}_\lambda f(s)) ds\right|
 \\ \leq &  \frac{K_T|\lambda|}{((1-\alpha^*)n(1+1/\eps)+1)^{\eps/(1+\eps)}}  \sup_{0<s \leq T} |s^{(\alpha^*-1)n}
 (\mathcal{R}^{n}_\lambda g(s)-\mathcal{R}^{n}_\lambda f(s))|.
 \end{align*}
 Proceeding inductively, we arrive at
  \begin{align}\label{eq:estimate 2}
&\sup_{0\leq t\leq T} \left| \mathcal{R}^{n+1}_\lambda g(t)-\mathcal{R}^{n+1}_\lambda f(t)\right| \nonumber \\ \leq & 
(|\lambda|K_T T^{1-\alpha^*})^{n+1}\prod_{l=1}^n \big(l(1-\alpha^*)(1+1/\eps)+1  \big)^{\frac{-\eps}{1+\eps}}
\|g-f\|_\infty \;    .
\end{align}
Since the factor in front of  $\|g-f\|_\infty$ tends to $0$ as $n\to\infty$, there exists $n_T\in\Nat$ such that  $\mathcal{R}^{n_T}_\lambda$ is a strict contraction on $B_b([0,T],\cC)$.
\end{proof}

\begin{corollary}\label{cor:corollary}
Let Assumption~\ref{ass:r} hold. Then, for each $\lambda\in\cC$, there exists a unique solution $\Phi(\cdot,-\lambda)\in B_b([0,T],\cC)$, $\forall\,\, T>0$, 
 of the following Volterra  equation of the second kind
\begin{align}\label{eq:Volterra for Phi}
\Phi(t,-\lambda)=1-\lambda\int_0^t k(t,s)\Phi(s,-\lambda)ds,\qquad t>0.
\end{align}
Moreover, $\lim_{t\searrow0}\Phi(t,-\lambda)=1$ locally uniformly with respect to $\lambda\in\cC$,
 $\Phi(t,\cdot)$ is an entire function for  all $t\geq0$ and equalities~\eqref{eq:Phi} and~\eqref{eq:coeff of Phi} hold. 
\end{corollary}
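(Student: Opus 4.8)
The plan is to deduce everything from Lemma~\ref{lem:contraction} and the key estimate~\eqref{eq:estimate k} established inside its proof. Note first that the Volterra equation~\eqref{eq:Volterra for Phi} is precisely the fixed-point equation $g=\mathcal{R}_\lambda g$ for the operator from~\eqref{eq: operator R-lambda}. I would fix $\lambda\in\cC$ and $T>0$. Since $B_b([0,T],\cC)$ is a Banach space and $\mathcal{R}_\lambda^{n_T}$ is a strict contraction by Lemma~\ref{lem:contraction}, the Banach fixed-point theorem gives a unique fixed point of $\mathcal{R}_\lambda^{n_T}$; a standard argument promotes this to a unique fixed point $\Phi(\cdot,-\lambda)$ of $\mathcal{R}_\lambda$ itself (if $g$ is the unique fixed point of $\mathcal{R}_\lambda^{n_T}$, then $\mathcal{R}_\lambda^{n_T}(\mathcal{R}_\lambda g)=\mathcal{R}_\lambda(\mathcal{R}_\lambda^{n_T}g)=\mathcal{R}_\lambda g$, so $\mathcal{R}_\lambda g=g$). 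Consistency of the solutions obtained for different $T$ follows from the causal (Volterra) structure: the restriction to $[0,T]$ of the solution on $[0,T']$, $T<T'$, again solves~\eqref{eq:Volterra for Phi} on $[0,T]$ and hence agrees with $\Phi(\cdot,-\lambda)$ by uniqueness. This settles existence and uniqueness.

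Next I would establish the series representation~\eqref{eq:Phi}--\eqref{eq:coeff of Phi}. Defining the $c_n$ by the stated recursion and setting $M_n:=\sup_{0<t\leq T}|t^{(\alpha^*-1)n}c_n(t)|$, an induction using~\eqref{eq:estimate k} with $g=c_{n-1}$ shows simultaneously that each integral in the recursion exists (so the $c_n$ are well-defined and, being bounded by $M_n t^{(1-\alpha^*)n}$, lie in $B_b([0,T],\cC)$) and that
\[
M_n \leq K_T^n\prod_{j=1}^{n}\big((1-\alpha^*)(j-1)(1+1/\eps)+1\big)^{-\eps/(1+\eps)},\qquad M_0=1.
\]
Since $\alpha^*\in[0,1)$, the $j$-th factor decays like $(\mathrm{const}\cdot j)^{-\eps/(1+\eps)}$, so the product is bounded by $C^n\big((n-1)!\big)^{-\eps/(1+\eps)}$ for a constant $C$, whence $(M_n)^{1/n}\to0$ and $\sum_n M_n z^n$ has infinite radius of convergence. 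As $|c_n(t)|\leq M_n T^{(1-\alpha^*)n}$ on $[0,T]$, the series $\sum_n c_n(t)\lambda^n$ converges absolutely and locally uniformly in $(t,\lambda)$ and defines, for each fixed $t$, an entire function of $\lambda$. Setting $\Phi(t,-\lambda):=\sum_n c_n(t)(-\lambda)^n$ and inserting it into the right-hand side of~\eqref{eq:Volterra for Phi}, absolute convergence justifies interchanging sum and integral; the recursion $c_{n+1}(t)=\int_0^t k(t,s)c_n(s)\,ds$ then re-sums the series back to $\Phi(t,-\lambda)$, so this series solves the Volterra equation. By the uniqueness from the first step it coincides with the fixed point, proving~\eqref{eq:Phi}, \eqref{eq:coeff of Phi}, and that $\Phi(t,\cdot)$ is entire.

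Finally, for the behaviour at the origin I would apply~\eqref{eq:estimate k} with $n=1$ and $g=\Phi(\cdot,-\lambda)$ to obtain
\[
|\Phi(t,-\lambda)-1| = |\lambda|\,\Big|\int_0^t k(t,s)\Phi(s,-\lambda)\,ds\Big| \leq |\lambda|\,K_T\,t^{1-\alpha^*}\,\|\Phi(\cdot,-\lambda)\|_\infty .
\]
The series bound gives $\|\Phi(\cdot,-\lambda)\|_\infty\leq\sum_n M_n T^{(1-\alpha^*)n}|\lambda|^n\leq C(R,T)<\infty$ uniformly for $|\lambda|\leq R$, so the right-hand side tends to $0$ as $t\searrow0$ uniformly for $\lambda$ in compact sets, yielding the claimed locally uniform convergence $\Phi(t,-\lambda)\to1$. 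The main obstacle I anticipate is the second step: extracting from~\eqref{eq:estimate k} a summable coefficient bound strong enough to force an infinite radius of convergence (hence entireness), and carefully justifying the term-by-term integration when checking that the series solves~\eqref{eq:Volterra for Phi}; everything else is a routine application of the contraction lemma already proved.
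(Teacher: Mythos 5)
Your proposal is correct and follows essentially the same route as the paper: both rest on Lemma~\ref{lem:contraction} plus the Banach fixed-point theorem, and both establish the series~\eqref{eq:Phi} with identical coefficient bounds obtained by iterating~\eqref{eq:estimate k}. The only (cosmetic) differences are that the paper extracts the series as the telescoped limit of the Picard iterates $\Phi_n=\mathcal{R}_\lambda\Phi_{n-1}$ rather than defining it directly and invoking uniqueness, and it deduces $\lim_{t\searrow 0}\Phi(t,-\lambda)=1$ from the coefficient bounds $|c_n(t)|\leq K_T^n T^{n(1-\alpha^*)}\prod_l(\cdots)$ rather than from the equation itself, as you do.
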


\begin{proof}
Fix any $T>0$.  By the Banach fix-point theorem, there exists exactly one fixed point $\Phi(\cdot,-\lambda)\in B_b([0,T],\cC)$  of the  strict contraction $\mathcal{R}_\lambda^{n_T}$  
due to Lemma~\ref{lem:contraction}. Hence $\Phi(\cdot,-\lambda)$ is also the unique fixed point of the operator $\mathcal{R}_\lambda$, i.e. the equation~\eqref{eq:Volterra for Phi} has the unique solution $\Phi(\cdot,-\lambda)\in B_b([0,T],\cC)$ which can be obtained by the Picard iterations 
\begin{align*}
\Phi(t,-\lambda)=\lim_{n\to\infty}\Phi_n(t,-\lambda), \qquad t\in[0,T], 
\end{align*}
where 
\begin{align*}
&\Phi_0(t,-\lambda):=1,\quad t\in[0,T],\\
&
\Phi_n(t,-\lambda):=\left(\mathcal{R}_\lambda\Phi_{n-1}(\cdot,-\lambda)\right)(t)=\\
&
\phantom{\Phi_n(t,-\lambda):}
\!=
\left\{
\begin{array}{ll}
1-\lambda\int_0^t k(t,s)\Phi_{n-1}(s,-\lambda)ds, &  t\in(0,T], \\
1, & t=0, 
\end{array}\right. \quad n\in\Nat.
\end{align*}
Using auxilliary functions $\phi_0(t,\lambda):=1$ and $\phi_n(t,\lambda):=\Phi_n(t,-\lambda)-\Phi_{n-1}(t,-\lambda)$
 for $n\in\Nat$ (i.e. $\phi_n(t,\lambda)= -\lambda\int_0^t k(t,s)\phi_{n-1}(s,\lambda)ds$ for $t\in(0,T]$), we get
$$
\Phi(t,-\lambda)=\sum_{n=0}^\infty\phi_n(t,\lambda)=\sum_{n=0}^\infty c_n(t)(-\lambda)^n,
$$
where the coefficients $c_n(t)$, $n\in\Nat$, $t\in[0,T]$, are given by~\eqref{eq:coeff of Phi}. Therefore, for each $\lambda\in\cC$ and each $T>0$, the function $\Phi$, given by~\eqref{eq:Phi}, belongs to $B_b([0,T],\cC)$. 
Therefore, for  all $t\geq0$, the series $\sum_{n=0}^\infty c_n(t)\lambda^n$ converges in $\cC$, i.e. the function $\Phi(t,\cdot)$ is an entire function for  all $t\geq0$.
Moreover, we have for each $n\in\Nat$ and each $t\in[0,T]$, by iterating ~\eqref{eq:estimate k} analogously to the derivation of~\eqref{eq:estimate 2},
\begin{align*}
|c_n(t)|\leq K_T^{n}T^{n(1-\alpha^*)}\prod_{l=1}^{n-1} \left[ \big(l(1-\alpha^*)(1+1/\eps)+1  \big)^{\frac{-\eps}{1+\eps}} \right]\to0,\quad T\to0,
\end{align*}
and for any $R>0$ and any $\lambda\in\{z\in\cC\,:\,|z|\leq R\}$
\begin{align*}
\left| \Phi(t,-\lambda)  \right|&\leq\sum_{n=0}^\infty R^n|c_n(t)|\\
&
\leq\sum_{n=0}^\infty R^n K_T^{n} T^{n(1-\alpha^*)}\prod_{k=1}^{n-1} \left[ \big(k(1-\alpha^*)(1+1/\eps)+1  \big)^{\frac{-\eps}{1+\eps}} \right]<\infty.
\end{align*}
Hence, we have
$\liml_{t\searrow0}|\Phi(t,-\lambda)|=1$   and $\liml_{t\searrow0}\Phi(t,-\lambda)=1$  locally uniformly w.r.t. $\lambda\in\cC$.
\end{proof}

We are now ready to present  the proof of Theorem \ref{thm:generalTheorem}.
\begin{proof}[Proof of Theorem \ref{thm:generalTheorem}]
 (i) By Corollary \ref{cor:corollary}, the family of Volterra equations for the General Relation
~\eqref{eq:generalRelation} has $\Phi(t,-\psi(p))$ as its unique locally bounded solution, where $\Phi$ is given by~\eqref{eq:Phi},~\eqref{eq:coeff of Phi}.
 Thus, by Proposition \ref{thm:generalRelation}, $(X_t)_{t\geq 0}$ provides a stochastic solution to the evolution equation~\eqref{eq:EvEq}, if and only if $\ffi_{X_t}(\cdot)=\Phi(t,-\psi(\cdot))$ holds for every $t\geq 0$. Now, if a stochastic solution exists, then 
 $\Phi(t,-\psi(\cdot))$ is positive definite for every $t\geq 0$, since every characteristic function has this property. 
 On the other hand, if  $\Phi(t,-\psi(\cdot))$ is  a positive definite function, then, for each $t\geq0$, there exists a random variable $\tilde X_t$ such that $\Phi(t,-\psi(\cdot))=\ffi_{\tilde X_t}$ (recalling that $\Phi(t,0)=1$).
We may now choose any stochastic process $(X_t)_{t\geq0}$ such that  $X_t$ has the same distribution as $\tilde X_t$ for every $t\geq 0$. E.g. one can construct an independent family $(X_t)_{t\geq 0}$ with these one-dimensional marginal distributions
on an infinite product space. Then, $\ffi_{X_t}(\cdot)=\Phi(t,-\psi(\cdot))$ for every $t\geq 0$, and, hence, a stochastic solution exists.
\\[0.2cm]
(ii) Let the restriction of $\Phi(t,-\cdot)$ on $(0,\infty)$ be a completely monotone function for each $t\geq0$. Hence, for each $t\geq0$, there exists a non-negative random variable $A(t)$ whose distribution $\mathcal{P}_{A(t)}$ has Laplace transform $\Phi(t,-\cdot)$ by the Bernstein theorem. Let $(Y_t)_{t\geq0}$ be a $d$-dimensional L\'{e}vy process with characteristic exponent $\psi$ which is independent from $(A(t))_{t\geq0}$. Then we have
\begin{align*}
\Phi(t,-\psi(p))&=\int_0^\infty e^{-a\psi(p)}\mathcal{P}_{A(t)}(da)=\int_0^\infty  \eE\left[ e^{ip\cdot Y_a}\right]\mathcal{P}_{A(t)}(da)\\
&
=\eE\left[ e^{ip\cdot Y_{A(t)}}\right]=\ffi_{Y_{A(t)}}(p).
\end{align*}
Therefore, the function $\Phi(t,-\psi(\cdot))$ is positive definite and  $(Y_{A(t)})_{t\geq 0}$ provides a stochastic solution to \eqref{eq:EvEq} by statement~(i) of Theorem~\ref{thm:generalTheorem}.  
\\[0.2cm]
(iii)
Let now, additionally, the symbol $\psi$ be given by $\psi:=f\circ\widetilde{\psi}$ for some other CNDF $\widetilde{\psi}$ and some Bernstein function $f$. Then the function $\Phi(t,-f(\cdot))$ is completely monotone as a composition CMF$\circ$BF. 
Hence, for each $t\geq0$, there exists a non-negative random variable $\widetilde{A}(t)$ whose distribution $\mathcal{P}_{\widetilde{A}(t)}$ has Laplace transform $\Phi(t,-f(\cdot))$. Taking a $d$-dimensional L\'{e}vy process $(\widetilde{Y}_t)_{t\geq0}$ with characteristic exponent $\widetilde{\psi}$ which is independent from $(\widetilde{A}(t))_{t\geq0}$, we obtain
\begin{align*}
\Phi(t,-\psi(p))&=\Phi(t,-f(\widetilde{\psi}(p)))  =\int_0^\infty e^{-a\widetilde{\psi}(p)}\mathcal{P}_{\widetilde{A}(t)}(da)\\
&
=\int_0^\infty  \eE\left[ e^{ip\cdot \widetilde{Y}_a}\right]\mathcal{P}_{\widetilde{A}(t)}(da)=\eE\left[ e^{ip\cdot \widetilde{Y}_{\widetilde{A}(t)}}\right]=\ffi_{\widetilde{Y}_{\widetilde{A}(t)}}(p),
\end{align*}
and we may conclude as in (ii).
 \end{proof}



\begin{remark}
Let $\Phi$ and $f$ be as in Theorem~\ref{thm:generalTheorem}~(iii).  Then the completely monotone function $\Phi(t,-f(\cdot))$ extends to an analytical function in $\Pi_+:=\{z\in\cC\,:\,\Re z>0\}$. This function does not have to be analytical in the whole complex plane. It follows from Corollary~\ref{cor:corollary} (with $\lambda=f(z)$) that $ \widetilde{\Phi}(t,\cdot):= \Phi(t,-f(\cdot))$ solves the following version of Volterra equation~\eqref{eq:Volterra for Phi}:
\begin{align*}
\widetilde{\Phi}(t,z)=1-f(z)\int_0^tk(t,s)\widetilde{\Phi}(s,z)ds,\qquad t>0,\quad z\in\Pi_+.
\end{align*}
\end{remark}

\begin{remark}
 Suppose that Assumptions \ref{ass:u_0+psi}, \ref{ass:r} are in force. If $(X_t)_{t\geq 0}$ provides a stochastic solution to \eqref{eq:EvEq}, then by Theorem 
 \ref{thm:generalTheorem} and \eqref{eq:Fourier-1},
 $$
 \mathbb{E}\left[u_0\left( x+X_t\right)\right]=\mathfrak{F}^{-1}\big[\mathfrak{F}[u_0](\cdot)\Phi(t,-\psi(\cdot))\big](x).
 $$
 The function on the right-hand side may be well-defined, even if no stochastic solutions exists. Adapting the arguments in the proof of Proposition \ref{thm:generalRelation} in the obvious way, one can e.g. show that
 $$
 u(t,x):=\mathfrak{F}^{-1}\big[\mathfrak{F}[u_0](\cdot)\Phi(t,-\psi(\cdot))\big](x)
 $$
 solves \eqref{eq:EvEq} for every initial condition $u_0\in S(\cRd)$, if $\Phi(t,-\psi(p))$
 satisfies a polynomial growth condition in $p$ which is locally uniform in $t$.
\end{remark}

\subsection{Proof of Proposition \ref{prop:cont}}
Continuity of $c_n$ at $t=0$ has already been shown in the proof
 of Corollary 3.1. We next prove inductively that $c_n$ is continuous on $(0,\infty)$. The claim is trivial for the constant function $c_0$. We write
 $$
 c_n(t)=t\int_0^1 k(t,ts)c_{n-1}(st) ds,
$$
and show that $c_n$ is continuous on $(1/T,T)$ for every $T>0$.
In view of the induction hypothesis and the continuity assumption on $k$, we only need to argue that we can interchange limit (in the $t$-variable) and integration in this expression. To this end, we apply the de la Val\'ee-Poussin criterion for uniform integrability of the family $(k(t,ts)c_{n-1}(st))_{t\in (1/T,T)}$ for arbitrary $T>0$. Thanks to Assumption 2.1, with the same choice of $\eps, \alpha^*$ as there,
\begin{eqnarray*}
&& \sup_{1/T\leq t\leq T} \int_0^1 |k(t,ts)c_{n-1}(st)|^{1+\eps} ds\\ & \leq& \sup_{0\leq u \leq T} |c_{n-1}(u)|^{1+\eps}  \sup_{1/T\leq t \leq T} t^{-1}t^{\alpha^*(1+\eps)} \int_0^t |k(t,s)|^{1+\eps}ds\;T^{\alpha^*(1+\eps)} \\
&\leq & T^{\alpha^*(1+\eps)} \sup_{0\leq u \leq T} |c_{n-1}(u)|^{1+\eps} K_T^{1+\eps}< \infty.
\end{eqnarray*}
This argument finishes the proof of continuity of $c_n$. Recall that
$$
\Phi(t,\lambda)=\sum_{n=0}^\infty c_n(t)\lambda^n.
$$
We have shown that all the summands are continuous in $(t,\lambda)$. Now, for every $T>0$, $t\in [0,T]$ and $\lambda\in \mathbb{C}$ with $|\lambda|\leq T$,
$$
|c_n(t)\lambda^n|\leq K_T^{n}T^{n(2-\alpha^*)}\prod_{l=1}^{n-1} \left[ \big(l(1-\alpha^*)(1+1/\eps)+1  \big)^{\frac{-\eps}{1+\eps}} \right],
$$
by the proof of Corollary 3.1,
and the right-hand side is summable. We may thus interchange limits in the $(t,\lambda)$-variables and summation,  yielding the continuity of $\Phi$.
For the continuity of the function $u$, recall that by \eqref{eq:Fourier-1}
$$
u(t,x)=\mathfrak{F}^{-1}[\mathfrak{F}[u_0](\cdot)\varphi_{X_t}(\cdot)](x),
$$
and thus continuity is inherited from $\Phi$ as a
consequence of the dominated convergence theorem with integrable majorant $|\mathfrak{F}[u_0]|$, since  characteristic functions are bounded by 1.

\subsection{Proof of Theorem \ref{thm:hom}}

Suppose that $k$ is homogeneous of degree $\beta-1$ for some $\beta>0$.
Since $k(t,ts)=t^{\beta-1}k(1,s)$, Assumption \ref{ass:cont} is obviously satisfied. We next check Assumption \ref{ass:r}. To this end note that
\begin{eqnarray*}
 t^{-1/(1+\eps)} \| k(t,\cdot)\|_{L^{1+\eps}((0,t))} &=& t^{-1/(1+\eps)}\left(t \int_0^1 |k(t,ts)|^{1+\eps} ds\right)^{1/(1+\eps)}\\ &=& t^{\beta-1} \|k(1,\cdot)\|_{L^{1+\eps}((0,1))}.
\end{eqnarray*}
Hence, Assumption \ref{ass:r} is satisfied with the choice $\alpha^*:=1-\beta\in [0,1)$  in the case $\beta\in(0,1]$ and with the choice $\alpha^*:=0$ in the case $\beta>1$.

We next observe, inductively, that
$c_n(t)=c_n(1)t^{n\beta}$, because
\begin{align*}
c_n(t)&=\int_0^t k(t,s)c_{n-1}(s)ds=t^{\beta}\int_0^1k(1,s)c_{n-1}(ts)ds\\
&
=t^{n\beta}\int_0^1k(1,s)c_{n-1}(s)ds=t^{n\beta}c_n(1).
\end{align*}
Let $\hat c_n=c_n(1)$. Then, by the previous considerations, $\hat c_n$ satsifies the recursion
$$
\hat c_n= \hat c_{n-1} \int_0^1k(1,s)s^{\beta(n-1)} ds,\quad \hat c_0=1
$$
and
\begin{align*}
\Phi(t,\lambda)=\sum_{n=0}^\infty \hat c_n (t^\beta\lambda)^n=\hat \Phi(t^\beta\lambda).
\end{align*}
Assume now that the restriction of the function $\hat \Phi(-\cdot)$ on $(0,\infty)$ is completely monotone. Then, for 
$\gamma\in(0,1]$, $x \mapsto \hat\Phi(-x^\gamma)$ is completely monotone on $(0,\infty)$, because $(\cdot)^\gamma$
is a Bernstein function. Thus, there is a nonnegative random variable $\tilde A$ with Laplace transform given by 
$\hat\Phi(-(\cdot)^\gamma)$. Then, for every $t\geq 0$, the Laplace transform of $\tilde A t^{\beta/\gamma}$ is given by
$$
\mathbb{E}\left[ e^{-\lambda \tilde A t^{\beta/\gamma}} \right]=\hat\Phi(-\lambda^\gamma t^\beta)=\Phi(t,-\lambda^\gamma),\quad \lambda>0.
$$
Now, Theorem \ref{thm:generalTheorem}, (iii), applies.

\subsection{Proof of Theorem \ref{thm:conv}}

We first note that Assumption \ref{ass:r} is satisfied  with $\alpha^*=1-\beta\in [0,1)$ for 
any sufficiently small $\eps>0$, because
\begin{eqnarray*}
&& \sup\limits_{0<t\leq T}t^{\alpha^* -\frac{1}{1+\eps}}\|k(t,\cdot)\|_{L^{1+\eps}((0,t))}\\ &\leq& Me^{\gamma T} \sup\limits_{0<t\leq T}\left(t^{\alpha^*-\frac{1}{1+\eps}} \left(\int_0^t (t-s)^{-\alpha^*(1+\eps)} ds\right)^{1/(1+\eps)}\right)=\frac{Me^{\gamma T}}{(1-\alpha^*(1+\eps))^{1/(1+\eps)}}.
\end{eqnarray*}

We  next prove inductively that
 $$
 e^{-\gamma t} |c_n(t)| \leq (M \Gamma(\beta) t^\beta)^n \frac{1}{\Gamma (n\beta +1)}.
 $$
 This is obvious for $n=0$ and the induction step follows by 
 \begin{eqnarray*}
  e^{-\gamma t} |c_n(t)| &\leq& \int_0^t M(t-s)^{\beta-1} e^{-\gamma s} |c_{n-1}(s)|ds
  \\ &\leq& M^n  \Gamma(\beta)^{n-1} \frac{1}{\Gamma ((n-1)\beta +1)}\int_0^t (t-s)^{\beta-1} s^{\beta(n-1)} ds \\ &=& 
   M^n  \Gamma(\beta)^{n-1} t^{\beta n} \frac{1}{\Gamma ((n-1)\beta +1)}\int_0^1 (1-s)^{\beta-1} s^{\beta(n-1)} ds
   \\ &=& (M \Gamma(\beta) t^\beta)^n \frac{1}{\Gamma (n\beta +1)}.
 \end{eqnarray*}
Then, for every $\lambda\in \mathbb{C}, \;t\geq 0$,
\begin{eqnarray*}
 \sum_{n=0}^\infty |c_n(t)| |\lambda|^n\leq e^{\gamma t} E_\beta(M\Gamma(\beta)|\lambda| t^\beta)\leq const.\; e^{(\gamma+
 (M\Gamma(\beta)|\lambda|)^{1/\beta})t}
\end{eqnarray*}
using the asymptotics for the Mittag-Leffler function, which can be found e.g. in \cite{MR2800586}, Eq. (6.4). We conclude that for every $\lambda\geq0$ and 
$\sigma> \gamma+
 (M\Gamma(\beta)|\lambda|)^{1/\beta}$, the Laplace  transform of 
 $\Phi(\cdot,-\lambda)$ exists and can be interchanged with the summation (by Fubini's theorem with Lebesgue measure and counting measure), i.e.,
 \begin{eqnarray*}
  (\mathcal{L}\Phi(\cdot,-\lambda))(\sigma)=\sum_{n=0}^\infty 
  (\mathcal{L}c_n)(\sigma)(-\lambda)^n.
 \end{eqnarray*}
By the convolution theorem for the Laplace transform and induction
$$
(\mathcal{L}c_n)(\sigma)=(\mathcal{L}\mathfrak{K})(\sigma)(\mathcal{L}c_{n-1})(\sigma)=\frac{1}{\sigma}[(\mathcal{L}\mathfrak{K})(\sigma)]^n
$$
for $\sigma>\gamma$, since $(\mathcal{L}1)(\sigma)=\sigma^{-1}$. Thus, for $\sigma> \gamma+
 (M\Gamma(\beta)|\lambda|)^{1/\beta}$,
 \begin{eqnarray*}
  (\mathcal{L}\Phi(\cdot,-\lambda))(\sigma)=\frac{1}{\sigma}\frac{1}{1+\lambda(\mathcal{L}\mathfrak{K})(\sigma)}.
 \end{eqnarray*}
 Since $\Phi$ inherits continuity from $\mathfrak{K}$ by Proposition \ref{prop:cont}
 and $t\mapsto E\left[e^{-\lambda A(t)}\right]$ is RCLL by dominated convergence, Lerch's uniqueness theorem implies that
 $$
 \Phi(t,-\lambda)= E\left[e^{-\lambda A(t)}\right]
 $$
 for every $\lambda\geq0$ and $t\geq 0$. Hence, $\Phi(t,-\cdot)$ is CM for every $t\geq 0$ and part (ii) of Theorem \ref{thm:generalTheorem} applies for the assertion concerning the stochastic solution.

\section{Stochastic solutions for generalized time-fractional evolution equations with Saigo-Maeda operators}\label{sec:special case}
In this  section, we consider generalized time-fractional evolution equations of the form~\eqref{eq:EvEq}, where the kernel $k$ is the kernel of some Saigo-Maeda operator of generalized fractional calculus. Saigo-Maeda operators provide extensions of the well-known operators of fractional calculus and include the Riemann-Liouville, Weil, Erd\'{e}lyi-Kober and Saigo operators as special cases.  We construct the function $\Phi$ correspondig  to such kernel $k$ and discuss stochastic solutions in terms of randomly slowed-down L\'evy processes of the considered evolution equations.


Let us first recall that  Appell's third generalization $F_3$  of the Gauss hypergeometric function is defined in the following way:
\begin{align}\label{eq:F3}
F_3\left(\alpha,\alpha',\beta,\beta',\gamma,x,y\right)=\sum_{m,n\geq 0} \frac{(\alpha)_m(\beta)_m(\alpha')_n(\beta')_n}{(\gamma)_{m+n} n!m!} x^my^n, 
\end{align}
where $\alpha,\alpha',\beta,\beta',\gamma\in\cC$, $\gamma \notin -\mathbb{N}$,    and    the general Pochhammer symbol $(\lambda)_\nu$ is defined as follows: 
\begin{align*}
(\lambda)_\nu:=\left\{\begin{array}{lll}
1, & \nu=0, & \lambda\in\cC\\
\lambda(\lambda-1)\cdot\ldots\cdot(\lambda+n-1), & \nu=n\in\Nat, & \lambda\in\cC.
\end{array}\right.
\end{align*}
The series in~\eqref{eq:F3}  converges for $|y|,|x|<1$ and can be analytically extended to reals $x,y<1$.

\begin{theorem}\label{thm:k-Phi_Saigo-Maeda}
Let  $b>0$, $a>0$, $\mu>-1$, and $\nu>\max\{-b,-a\mu\}$. Consider the kernel
\begin{align}\label{eq:k-Saigo-Maeda}
k(t,s):=\frac{a}{\Gamma(b/a)}(t^a-s^a)^{\frac{b}{a}-1}t^{a-\nu}s^{\nu-1} F_3\left(\frac{\nu}{a}-1,\frac{b}{a},1,\mu, \frac{b}{a}, 1-\left(\frac{s}{t}\right)^a,1-\left(\frac{t}{s}\right)^a\right),
\end{align}
where $0<s<t$.  Then the kernel $k$  is homogeneous of degree $b-1$ and satisfies 
$k(1,\cdot)\in L^{1+\eps}((0,1))$ for some $\eps>0$.  The corresponding function $\hat \Phi$ in Theorem~\ref{thm:hom}  has the following form:
\begin{align}\label{eq:Phi Saigo-Maeda}
 \hat \Phi(z)= \Gamma(\lambda_2) E_{\lambda_1,\lambda_2}^{\lambda_3}(z),
 \end{align}
 where
\begin{align}\label{eq:lambda 1-2-3}
\lambda_1=\frac{b}{a},\quad \lambda_2=\frac{\nu}{a}+\mu,\quad \lambda_3= 1+\frac{\nu-a}{b},
\end{align}
and  $E_{\lambda_1,\lambda_2}^{\lambda_3}$ is the three parameter Mittag-Leffler (or Prabhakar) function\footnote{The function $E_{\lambda_1,\lambda_2}^{\lambda_3}$ is well-defined on the whole $\cC$ for $\Re\lambda_1>0$ and is an entire function. }
\begin{align*}
E_{\lambda_1,\lambda_2}^{\lambda_3}(z):=\sum_{n=0}^\infty \frac{\left(\lambda_3\right)_n}{\Gamma\left(\lambda_1 n+\lambda_2\right)n!}\,z^n.
\end{align*}
\end{theorem}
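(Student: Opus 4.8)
The plan is to establish the three assertions of the statement in turn: the homogeneity of $k$, the identification of $\hat\Phi$ via the recursion of Theorem~\ref{thm:hom}, and finally the $L^{1+\eps}$-integrability, which I expect to be the genuine technical obstacle. For homogeneity I would simply substitute $s\mapsto ts$ in \eqref{eq:k-Saigo-Maeda}. Since $(t^a-(ts)^a)^{b/a-1}=t^{b-a}(1-s^a)^{b/a-1}$ and $t^{a-\nu}(ts)^{\nu-1}=t^{a-1}s^{\nu-1}$, while the two $F_3$-arguments $1-((ts)/t)^a=1-s^a$ and $1-(t/(ts))^a=1-s^{-a}$ are free of $t$, collecting the powers $t^{b-a}\cdot t^{a-1}=t^{b-1}$ gives $k(t,ts)=t^{b-1}k(1,s)$, so $k$ is homogeneous of degree $b-1$ and the relevant degree in Theorem~\ref{thm:hom} is $\beta=b$, with
\[
k(1,s)=\frac{a}{\Gamma(b/a)}(1-s^a)^{b/a-1}s^{\nu-1}F_3\Big(\tfrac{\nu}{a}-1,\tfrac{b}{a},1,\mu,\tfrac{b}{a};1-s^a,1-s^{-a}\Big).
\]

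The heart of the proof is the computation of the recursion integrals $J_n:=\int_0^1 k(1,s)s^{b(n-1)}\,ds$ from Theorem~\ref{thm:hom}. I would perform the change of variables $u=s^a$, which transforms $J_n$ into the left-sided Saigo--Maeda fractional integral $I_{0+}^{\alpha,\alpha',\beta,\beta',\gamma}$ evaluated at the point $1$ on the power function $\tau^{\rho-1}$, under the parameter identification $\alpha=\nu/a-1$, $\alpha'=\gamma=b/a$, $\beta=1$, $\beta'=\mu$, and $\rho=(bn+\nu)/a$. The known Saigo--Maeda power-function rule then yields
\[
J_n=\frac{\Gamma(\rho)\,\Gamma(\rho+\gamma-\alpha-\alpha'-\beta)\,\Gamma(\rho-\alpha'+\beta')}{\Gamma(\rho+\beta')\,\Gamma(\rho+\gamma-\alpha-\alpha')\,\Gamma(\rho+\gamma-\alpha'-\beta)}.
\]
Crucially, the three validity conditions of that rule, namely $\Re\rho>0$, $\Re\rho>\Re(\alpha+\alpha'+\beta-\gamma)$, and $\Re\rho>\Re(\alpha'-\beta')$, reduce for every $n\in\Nat$ to $bn+\nu>0$, $bn>0$, and $b(n-1)+\nu+a\mu>0$, which are guaranteed exactly by the standing hypotheses $\nu>\max\{-b,-a\mu\}$; this is where those restrictions enter, and why each $J_n$ is finite.

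Substituting the parameters and repeatedly applying $\Gamma(z+1)=z\Gamma(z)$ collapses the six gamma factors; using the key identity $\lambda_1 n+\nu/a-1=\lambda_1(n+\lambda_3-1)$ (which is where $\lambda_3$ is produced) I would obtain
\[
J_n=\frac{n+\lambda_3-1}{n}\cdot\frac{\Gamma(\lambda_1(n-1)+\lambda_2)}{\Gamma(\lambda_1 n+\lambda_2)},
\]
with $\lambda_1,\lambda_2,\lambda_3$ as in \eqref{eq:lambda 1-2-3}. Since Theorem~\ref{thm:hom} gives $\hat c_n=\hat c_{n-1}J_n$ and $\hat c_0=1$, forming $\hat c_n=\prod_{m=1}^n J_m$ telescopes the gamma quotients to $\Gamma(\lambda_2)/\Gamma(\lambda_1 n+\lambda_2)$, while the remaining product equals $\prod_{m=1}^n(m+\lambda_3-1)/m=(\lambda_3)_n/n!$. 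Hence $\hat c_n=\Gamma(\lambda_2)(\lambda_3)_n/(\Gamma(\lambda_1 n+\lambda_2)\,n!)$ and $\hat\Phi(z)=\sum_{n\geq0}\hat c_n z^n=\Gamma(\lambda_2)E_{\lambda_1,\lambda_2}^{\lambda_3}(z)$, as claimed.

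It remains to verify $k(1,\cdot)\in L^{1+\eps}((0,1))$ for some $\eps>0$, which I expect to be the main obstacle. Near $s=1$ both $F_3$-arguments tend to $0$, so $F_3\to1$ and $|k(1,s)|\sim(1-s)^{b/a-1}$, which lies in $L^{1+\eps}$ for small $\eps$ because $b/a>0$. Near $s=0$, however, the first argument tends to $1$ and the second to $-\infty$, so the delicate point is the decay of $F_3(\cdot;x,y)$ as $y\to-\infty$; I would control this through the known asymptotic expansion of Appell's $F_3$ (equivalently, through the convergence of the Saigo--Maeda integral computed above with $\rho$ perturbed by a small amount), the strict inequalities $\nu>\max\{-b,-a\mu\}$ again furnishing the slack needed to absorb the exponent $1+\eps$.
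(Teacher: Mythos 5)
Your proposal follows the paper's own proof almost verbatim in its two algebraic components: homogeneity via the substitution $s\mapsto ts$, and the identification of $\hat\Phi$ by rewriting $\int_0^1 k(1,s)s^{b(n-1)}\,ds$, after the change of variables $\theta=s^a$, as the Saigo--Maeda integral $\bigl(I_{0+}^{\nu/a-1,\,b/a,\,1,\,\mu,\,b/a}\,\theta^{(nb+\nu)/a-1}\bigr)(1)$ and invoking the power-function rule. Your reduction of the validity conditions to $bn+\nu>0$, $bn>0$, $b(n-1)+\nu+a\mu>0$ coincides with \eqref{eq:conditions_ML1}; your identity $\lambda_1 n+\nu/a-1=\lambda_1(n+\lambda_3-1)$ is exactly the paper's simplification of the gamma quotient; and your explicit telescoping of $\hat c_n=\prod_{m=1}^n J_m$ is equivalent to the paper's comparison of successive coefficient ratios of $E^{\lambda_3}_{\lambda_1,\lambda_2}$ together with $E^{\lambda_3}_{\lambda_1,\lambda_2}(0)=1/\Gamma(\lambda_2)$. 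Your analysis at $s=1$ is also fine: both $F_3$-arguments tend to $0$ and $F_3$ is analytic on the polydisc $|x|,|y|<1$, so $F_3\to 1$; this is even slightly sharper than the bound $O\bigl((1-s)^{\min\{0,1-b/a\}}\bigr)$ that the paper cites for this endpoint.

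The gap is precisely in the step you yourself flagged as the obstacle, the integrability at $s=0$, which you do not actually carry out. The paper settles it by quoting Lemma~3.1.2 of \cite{Saxena},
\begin{equation*}
F_3\left(\tfrac{\nu}{a}-1,\tfrac{b}{a},1,\mu,\tfrac{b}{a},1-s,1-\tfrac1s\right)=O\left(s^{\min\left\{\frac{b}{a},\,\mu,\,\frac{b-\nu}{a}\right\}}\right),\qquad s\to0,
\end{equation*}
and then checking that the resulting exponent in the $L^{1+\eps}$-norm integrand stays above $-1$ for small $\eps$ because $\frac{\nu}{a}+\min\left\{\frac{b}{a},\mu,\frac{b-\nu}{a}\right\}>0$, which is exactly the conjunction of $\nu>-b$, $\nu>-a\mu$ and $b>0$. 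So your intuition about where the hypotheses enter is correct, but without stating this bound and doing this arithmetic, the claim $k(1,\cdot)\in L^{1+\eps}((0,1))$ is asserted rather than proved. Moreover, your parenthetical ``equivalent'' fallback --- deducing the $L^{1+\eps}$ bound from convergence of the Saigo--Maeda integrals with $\rho$ slightly perturbed --- does not work: finiteness of $\int_0^1 |k(1,s)|\,s^{\rho'}\,ds$ for a whole range of $\rho'$ is weighted $L^1$ information and does not imply membership in $L^{1+\eps}$ for any $\eps>0$ (the function $s\mapsto 1/(s\log^2 s)$ near $0$ integrates against every nonnegative power of $s$ yet lies in no $L^{1+\eps}$). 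That aside should be deleted; only the asymptotics route closes the argument.
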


\begin{proof}
First note that
\begin{align*}
k(t,ts)= \frac{a}{\Gamma(b/a)} t^{b-1} (1-s^a)^{b/a-1}s^{\nu-1} F_3(\nu/a-1,b/a,1,\mu, b/a, 1-s^a,1-1/s^a),
\end{align*}
and, thus, $k$ is homogeneous of degree $b-1$.
Let 
$$
\kappa(s):=\frac{1}{\Gamma(b/a)}  (1-s)^{b/a-1}s^{(\nu-1)/a} F_3(\nu/a-1,b/a,1,\mu, b/a, 1-s,1-1/s),\quad 0<s<1.
$$
Then, $k(t,ts)=t^{b-1}k(1,s)=t^{b-1}a\kappa(s^a)$. In particular,
 \begin{align*}
 \|k(1,\cdot)\|^{1+\eps}_{L^{1+\eps}((0,1))}&= \int_0^1 (a\kappa(s^a))^{1+\eps} ds =a^\eps \int_0^1 \kappa(s)^{1+\eps} s^{1/a-1}ds 
\end{align*}
Inserting the definition of $\kappa$, the integral converges, if and only if the integral
$$
\int_0^1 (1-s)^{(b/a-1)(1+\eps)}s^{(1+\eps)(\nu-1)/a+1/a-1} F_3(\nu/a-1,b/a,1,\mu, b/a, 1-s,1-1/s)^{1+\eps} ds
$$
does. Recall the asymptotic behavior  (see, e.g., Lemma~3.1.2 in~\cite{Saxena})
\begin{align*}
 F_3(\nu/a-1,b/a,1,\mu, b/a, 1-s,1-1/s)=O\left((1-s)^{\min\{0,1-\frac{b}{a}\}}\right),\quad s\rightarrow 1,
\end{align*}
which shows convergence at $s=1$ for small $\eps>0$, and
\begin{align*}
 F_3(\nu/a-1,b/a,1,\mu, b/a, 1-s,1-1/s)=O\left(s^{\min\{\frac{b}{a},\mu,\frac{b-\nu}{a}\}}\right),\quad s\rightarrow 0,
\end{align*}
which implies convergence at $s=0$ for small $\eps$ due to our conditions on the parameters $a$, $b$, $\mu$, $\nu$.

Let us now determine the corresponding function $\Phi$. The recursion formula for the coefficients $\hat c_n$'s (initialized 
with $\hat c_0=1)$ reads
\begin{align*}
\hat c_n&=\hat{c}_{n-1}  \int_0^1 k(1,s)s^{(n-1)b} ds= \hat{c}_{n-1}  a \int_0^1 \kappa(s^a)(s^a)^{(n-1)b/a} ds \\
&= \hat{c}_{n-1} \int_0^1 \kappa(\theta) \theta^{(n-1)b/a+1/a-1}d\theta.
\end{align*}
Then,
\begin{align*}
 \frac{\hat c_{n}}{\hat c_{n-1}}= \frac{1}{\Gamma(b/a)} \int_0^1 \theta^{\frac{nb+\nu}{a}-1} (1-\theta)^{\frac{b}{a}-1}\theta^{-\frac{b}{a}} F_3\left(\frac{\nu}{a}-1, \frac{b}{a}, 1, \mu,\frac{b}{a}, 1-\theta,1- \frac1\theta \right) d\theta.
\end{align*}
Recall that the Saigo-Maeda operator $I^{\alpha,\alpha',\beta,\beta',\gamma}_{0+}$ is defined in the following way (see, e.g., Def.~3.2.1. in~\cite{Saxena}):
\begin{align*}
&\left( I^{\alpha,\alpha',\beta,\beta',\gamma}_{0+} f\right)(x):=\\
&
\qquad=\frac{x^{-\alpha}}{\Gamma(\gamma)}\int_0^x (x-\theta)^{\gamma-1}\theta^{-\alpha'}F_3\left(\alpha,\alpha',\beta,\beta',\gamma, 1-\frac{\theta}{x}, 1-\frac{x}{\theta}   \right)f(\theta)d\theta.
\end{align*}
Hence we have with $f(\theta):=\theta^{\frac{n b+\nu}{a}-1}$
\begin{align*}
 \frac{\hat c_{n}}{\hat c_{n-1}}= \left( I^{\frac{\nu}{a}-1,\frac{b}{a}, 1,\mu,\frac{b}{a}}_{0+} f\right)(1).
 \end{align*}
Note that   the integral $I^{\alpha,\alpha',\beta,\beta',\gamma}_{0+} f$ converges for $f(\theta):=\theta^{\rho-1}$ and it holds
\begin{align*}
\left( I^{\alpha,\alpha',\beta,\beta',\gamma}_{0+} f\right)(x)=\frac{\Gamma(\rho)\Gamma(\rho+\gamma-\alpha-\alpha'-\beta)\Gamma(\rho-\alpha'+\beta')}{\Gamma(\rho+\beta')\Gamma(\rho+\gamma-\alpha-\alpha')\Gamma(\rho+\gamma-\alpha'-\beta)}
\end{align*}
in the case when  $\gamma>0$ and $\rho>\max\{0,\alpha+\alpha'+\beta-\gamma, \alpha'-\beta' \}$ (cf., e.g., Example~3.2.1 in~\cite{Saxena}). Our choice of parameters 
$$
\alpha=\frac{\nu}{a}-1,\quad \alpha'=\gamma=\frac{b}{a}, \quad \beta=1,\quad \beta'=\mu, \quad \rho=\frac{nb+\nu}{a},\;n\in\Nat,
$$
leads,   therefore, to the conditions 
\begin{equation}\label{eq:conditions_ML1}
a,b>0, \quad b>-\nu,\quad \nu>-a\mu,
\end{equation}
which we have postulated in the statement of Theorem~\ref{thm:k-Phi_Saigo-Maeda}.  Assuming \eqref{eq:conditions_ML1}, we, thus, obtain,
on the one hand,
\begin{eqnarray*}
 \frac{\hat c_{n}}{\hat c_{n-1}}&=& \frac{\Gamma(\frac{nb+\nu}{a})\Gamma(\frac{nb}{a})\Gamma(\frac{(n-1)b+\nu}{a}+\mu)}{\Gamma(\frac{nb+\nu}{a}+\mu)
 \Gamma(\frac{nb}{a}+1)\Gamma(\frac{nb+\nu}{a}-1)} = \frac{\Gamma(\frac{(n-1)b+\nu}{a}+\mu)}{\Gamma(\frac{nb+\nu}{a}+\mu)}\cdot \frac{\frac{nb+\nu}{a}-1}{\frac{nb}{a}} \\
 &=& \frac{\Gamma(\frac{(n-1)b+\nu}{a}+\mu)}{\Gamma(\frac{nb+\nu}{a}+\mu)} \cdot\frac{(n-1)+1+\frac{\nu-a}{b}}{n}.
\end{eqnarray*}
On the other hand, it holds for the coefficients of the three parameter Mittag-Leffler function $E^{\lambda_3}_{\lambda_1,\lambda_2}$: 
$$
\left(\frac{(\lambda_3)_n}{\Gamma(\lambda_1n+\lambda_2) n!}\right)/\left( \frac{(\lambda_3)_{n-1}}{\Gamma(\lambda_1(n-1)+\lambda_2) (n-1)!}\right)= 
\frac{\Gamma(\lambda_1(n-1)+\lambda_2) }{\Gamma(\lambda_1n+\lambda_2)}\cdot\frac{(n-1)+\lambda_3}{n}.
$$
Since $E^{\lambda_3}_{\lambda_1,\lambda_2}(0)=\frac{1}{\Gamma(\lambda_2)}$, we  obtain  with  $\lambda_1$, $\lambda_2$, $\lambda_3$ as in~\eqref{eq:lambda 1-2-3}
$$
\hat\Phi(z)=\Gamma(\lambda_2)E^{\lambda_3}_{\lambda_1,\lambda_2}(z),\qquad z\in\cC.
$$
\end{proof}

\begin{remark}\label{rem:CM-3parameterMLF}
Note that sufficient conditions for the complete monotonicity of the function  $z\mapsto  \Gamma(\lambda_2) E_{\lambda_1,\lambda_2}^{\lambda_3}(-z)$ are given  (see, e.g., \cite{Gorska2020} or the Appendix in~\cite{An}) by
 $$
 0<\lambda_1\leq 1,\quad 0<\lambda_3\leq \frac{\lambda_2}{\lambda_1}.
 $$
 This implies additional assumptions on parameters $a$, $b$, $\mu$, $\nu$:
 \begin{align*}
 b\leq a,\quad \nu>a-b,\quad \mu\geq \frac{b}{a}-1.
 \end{align*}
\end{remark}
Hence the following statement is a direct consequence of Theorem~\ref{thm:generalTheorem}, Theorem~\ref{thm:k-Phi_Saigo-Maeda}, Remark~\ref{rem:decoration} and Remark~\ref{rem:CM-3parameterMLF}.

\begin{corollary}\label{cor:saigo_maeda}
Let   $b>0$, $a\geq b$, $\mu\geq \frac{b}{a}-1$, $\nu>\max\left\{a-b,-a\mu   \right\}$. Let the kernel $k$ be given by~\eqref{eq:k-Saigo-Maeda}, $g\in\mathcal{G}$ from Remark~\ref{rem:decoration} and the corresponding kernel $\kappa_g$ be given by~\eqref{eq: k decorated}. Let Assumption~\ref{ass:u_0+psi} hold. 
 Let $A$ denote a non-negative random variable, whose distribution has Laplace transform  $z\mapsto  \Gamma(\lambda_2) E_{\lambda_1,\lambda_2}^{\lambda_3}(-z)$ with parameters $\lambda_1$, $\lambda_2$, $\lambda_3$ as in~\eqref{eq:lambda 1-2-3}.   Let $(Y_t)_{t\geq0}$ be a $\cRd$-valued L\'{e}vy process with generator $(L,\Dom(L))$ which is independent of $A$.  Then 
\begin{align}
v(\tau,x):=\eE\left[u_0\left( x+Y_{Ag^b(\tau)} \right)  \right]
\end{align}
solves the corresponding  generalized time-fractional evolution equation~\eqref{eq:decoratedEvEq}.
\end{corollary}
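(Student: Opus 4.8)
The plan is to read off the conclusion as a chaining of the already established results, with the choice $\gamma=1$ and degree of homogeneity $\beta=b$. First I would record that the parameter assumptions of the corollary — namely $b>0$, $a\geq b$, $\mu\geq \frac{b}{a}-1$ and $\nu>\max\{a-b,-a\mu\}$ — imply both the standing hypotheses $b>0$, $a>0$, $\mu>-1$, $\nu>\max\{-b,-a\mu\}$ of Theorem~\ref{thm:k-Phi_Saigo-Maeda} (since $a\geq b>0$ gives $a-b\geq -b$, and $\mu\geq \frac{b}{a}-1>-1$) and the complete-monotonicity conditions of Remark~\ref{rem:CM-3parameterMLF}. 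Indeed, with $\lambda_1,\lambda_2,\lambda_3$ as in~\eqref{eq:lambda 1-2-3}, the inequalities $0<\lambda_1\leq 1$, $0<\lambda_3$ and $\lambda_3\leq \lambda_2/\lambda_1$ translate respectively into $b\leq a$, $\nu>a-b$ and $\mu\geq \frac{b}{a}-1$, which is precisely what is assumed.

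Next I would invoke Theorem~\ref{thm:k-Phi_Saigo-Maeda}: the Saigo-Maeda kernel~\eqref{eq:k-Saigo-Maeda} is homogeneous of degree $b-1$, satisfies $k(1,\cdot)\in L^{1+\eps}((0,1))$ for some $\eps>0$, and the associated function from Theorem~\ref{thm:hom} is $\hat\Phi(z)=\Gamma(\lambda_2)E_{\lambda_1,\lambda_2}^{\lambda_3}(z)$. By the previous paragraph together with Remark~\ref{rem:CM-3parameterMLF}, the restriction of $\hat\Phi(-\cdot)$ to $(0,\infty)$ is completely monotone, so by Bernstein's theorem it is the Laplace transform of a nonnegative random variable; this is exactly the law of $A$ posited in the corollary.

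I would then apply Theorem~\ref{thm:hom} with the trivial splitting $\gamma=1$, $\tilde\psi=\psi$, $\tilde Y=Y$ (so that Assumption~\ref{ass:u_0+psi} is the one in force and $(L,\Dom(L))$ is the generator of $Y$). Since $\hat\Phi(-\cdot)=\hat\Phi(-(\cdot)^\gamma)$ is completely monotone and $A$ carries this Laplace transform, the theorem yields that $(Y_{A t^{\beta/\gamma}})_{t\geq 0}=(Y_{A t^{b}})_{t\geq 0}$ provides a stochastic solution to the undecorated equation~\eqref{eq:EvEq} with kernel $k$. Finally I would pass to the time-stretched equation via Remark~\ref{rem:decoration}: setting $(X_t)_{t\geq0}:=(Y_{At^{b}})_{t\geq0}$, the relation $v(\tau,x)=\eE[u_0(x+X_{g(\tau)})]=\eE[u_0(x+Y_{A g(\tau)^{b}})]=\eE[u_0(x+Y_{Ag^{b}(\tau)})]$ then solves the stretched equation~\eqref{eq:decoratedEvEq} for any $g\in\mathcal{G}$, which is the asserted conclusion.

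Since each step is a direct application of an earlier statement, there is no substantial analytic obstacle. The only point demanding genuine care is the bookkeeping: verifying that the corollary's hypotheses on $(a,b,\mu,\nu)$ are exactly strong enough to trigger simultaneously Theorem~\ref{thm:k-Phi_Saigo-Maeda} and the complete-monotonicity criterion of Remark~\ref{rem:CM-3parameterMLF}, and tracking the exponent $\beta/\gamma=b$ correctly through the time change $t\mapsto g(\tau)$ so that the random time indeed becomes $A\,g(\tau)^{b}$ rather than $A\,g(\tau^{b})$.
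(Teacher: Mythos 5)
Your proposal is correct and follows essentially the same route as the paper, which states the corollary as a direct consequence of Theorem~\ref{thm:generalTheorem}, Theorem~\ref{thm:k-Phi_Saigo-Maeda}, Remark~\ref{rem:decoration} and Remark~\ref{rem:CM-3parameterMLF}; your chaining through Theorem~\ref{thm:hom} with $\gamma=1$ is the natural way to make that chain explicit, since Theorem~\ref{thm:hom} is itself proved via Theorem~\ref{thm:generalTheorem}~(iii). Your parameter bookkeeping (translating $0<\lambda_1\leq 1$, $0<\lambda_3\leq\lambda_2/\lambda_1$ into $b\leq a$, $\nu>a-b$, $\mu\geq \frac{b}{a}-1$) and the care with $A\,g(\tau)^b$ versus $A\,g(\tau^b)$ are exactly the points the paper's hypotheses are designed to cover.
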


\begin{example}\label{ex:saigo_maeda}
(i) Let us consider the case  $\lambda_3=1$, i.e. $\nu=a$. Then $E^{\lambda_3}_{\lambda_1,\lambda_2}$ reduces to the two parameter Mittag-Leffler function $E_{\lambda_1,\lambda_2}$ and we have
$$
\Phi(t,z)=\Gamma(\mu+1)E_{\frac{b}{a},\mu+1}(zt^b).
$$
Further,
\begin{align*}
 F_3(0,b/a,1,\mu, b/a, 1-(s/t)^a,1-(t/s)^a)=\sum_{n=0}^\infty\frac{(\mu)_n}{n!}\left( 1-(t/s)^a \right)^n   =(s/t)^{\mu a}.
\end{align*}
And the corresponding kernel $k$ simplifies to
 $$
 k(t,s)= \frac{a}{\Gamma(b/a)}(t^a-s^a)^{b/a-1}t^{-a\mu}s^{a(\mu+1)-1}.
 $$

\noindent (ii) Let us consider the case $\lambda_2=\lambda_3=1$, i.e. $\nu=a$ and $\mu=0$. Then $E^{\lambda_3}_{\lambda_1,\lambda_2}$ reduces to the classical  Mittag-Leffler function $E_{\lambda_1}$ and we have
$$
\Phi(t,z)=E_{\frac{b}{a}}(zt^b).
$$
The corresponding kernel $k$ simplifies to
 $$
 k(t,s)= \frac{a}{\Gamma(b/a)}(t^a-s^a)^{b/a-1}s^{a-1}.
 $$
 Let now $\beta\in(0,1]$, $\alpha\in(0,2)$. Choosing $b:=\alpha$ and $a:=\frac{\alpha}{\beta}$ we obtain the kernel of the governing equation~\eqref{GGBM-eq} of the GGBM and $\Phi(t,z)=E_\beta(z t^\alpha)$. If additionally $\alpha=\beta$, we get $k(t,s)=\frac{1}{\Gamma(\beta)}(t-s)^{\beta-1}$ and $\Phi(t,z)=E_\beta(z t^\beta)$, cp. Example \ref{ex:fractional}.
\end{example}

\begin{remark}
It follows immediately from Theorem~\ref{thm:generalTheorem}, Theorem~\ref{thm:k-Phi_Saigo-Maeda} and Corollary~\ref{cor:corollary}, that the three parameter Mittag-Leffler function $E^{1+\frac{\nu-a}{b}}_{\frac{b}{a},\,\frac{\nu}{a}+\mu}$ with $b>0$, $a>0$, $\mu>-1$, and $\nu>\max\{-b,-a\mu\}$ satisfies the following relation (which is nothing else but the Volterra equation~\eqref{eq:Volterra for Phi}):
\begin{align*}
&\Gamma\left(\frac{b}{a}  \right)E^{1+\frac{\nu-a}{b}}_{\frac{b}{a},\,\frac{\nu}{a}+\mu}\left( t^bz  \right)=1+az\intl_0^t  (t^a-s^a)^{\frac{b}{a}-1}t^{a-\nu}s^{\nu-1}\times\\
&
\times F_3\left(\frac{\nu}{a}-1,\frac{b}{a},1,\mu, \frac{b}{a}, 1-\left(\frac{s}{t}\right)^a,1-\left(\frac{t}{s}\right)^a\right)E^{1+\frac{\nu-a}{b}}_{\frac{b}{a},\,\frac{\nu}{a}+\mu}\left( s^bz  \right)\,ds,\quad t>0,\,\, z\in\cC.
\end{align*}
The special case of this relation for the classical Mittag-Leffler function is well-known and can be found, e.g. in Lemma~3.24 of~\cite{MR3244285}. 
\end{remark}

\section{Stochastic solutions with  stationary increments} \label{sec:fsm}

Some of the stochastic solutions,
that we derived,  (e.g., the ones in Theorem \ref{thm:hom}) were provided by randomly slowed-down / speeded-up L\'evy processes 
$(Y_{At^\beta})_{t\geq 0}$, where the positive random variable $A$ is independent of the L\'evy process $Y$. These processes may lack nice statistical properties which are appealing from a modeling point of view such as stationarity of the increments (which only holds for $\beta=1$) or self-similarity. Extending results of \cite{MR3513003} beyond the Gaussian 
case by the techniques explained in Example \ref{ex:fractional} above, we derive in this section stochastic solutions in terms of linear fractional 
stable motion. For the sake of exposition we restrict ourselves to one space dimension.

We first need to fix some notation. For the index of stability $\delta\in (0,2]$ and for 
the skewness parameter $\rho\in [-1,1]$ (with the restriction to the `symmetric case' $\rho=0$ for $\delta=1$), we consider the symbol
$-\psi_{\delta,\rho}$, where
$$
\psi_{\delta,\rho}(p)=|p|^\delta(1-i\rho\sign(p)\tan(\pi\delta/2)),\quad p\in \mathbb{R},
$$
covering the fractional Laplacian in space. The corresponding \emph{stable random
measure} $M_{\delta,\rho}$ is a $\sigma$-additive mapping from the Borel field $\mathcal{B}$ on $\mathbb{R}$ to the space of real-valued random variables which is \emph{randomly scattered} in the sense that 
$$
(M_{\delta,\rho}(A_1),\ldots, M_{\delta,\rho}(A_n))
$$
are independent, whenever $A_1,\ldots, A_n$ are pairwise disjoint, and such that $M_{\delta,\rho}(A)$ follows a stable law; precisely, 
$$
\eE[e^{ipM_{\delta,\rho}(A)}]=e^{-\Leb(A)\psi_{\delta,\rho}(p)},\qquad p\in \mathbb{R}, \quad A\in \mathcal{B}. 
$$
Here $\Leb$ denotes the Lebesgue measure on the real line. More details on stable random measures and integration with respect to them can be found in Chapter 3 of \cite{MR1280932}. The stable L\'evy motion corresponding to the characteristic exponent $\psi_{\delta,\rho}$ can be realized as 
$$
Y^{({\delta,\rho})}_t=M_{\delta,\rho}([0,t]),\quad t\geq 0.
$$
Note that, in the Gaussian case $\delta=2$, the normalization is chosen such that 
$
(\frac{1}{\sqrt{2}} Y^{({2,\rho})}_t)_{t\geq 0}
$
is a standard Brownian motion for any choice of $\rho\in [-1,1]$.

We may now consider a \emph{linear fractional L\'evy motion} of the form 
$$
Y^{(\delta,\rho,H)}_t=\frac{1}{K_{\delta,H}} \int_\mathbb{R} \left((t-x)_+^{H-1/\delta} -(-x)_+^{H-1/\delta}\right)\;M_{\delta,\rho}(dx),\quad t\geq 0
$$
for $H\in(0,1)\setminus \{1/\delta\}$, which contains the Mandelbrot-Van Ness representation for fractional Brownian motion as special case for $\delta=2$ (up to the factor $1/\sqrt{2}$ as explained above). Here, the normalizing constant is
$$
K_{\delta,H}=\left(\int_0^\infty \left|(1+x)^{H-1/\delta} -x^{H-1/\delta}\right|^{\delta} dx +\frac{1}{\delta H}\right)^{1/\delta}.
$$
By Proposition 7.4.2 in \cite{MR1280932}, linear fractional stable motion 
$(Y^{(\delta,\rho,H)}_t)_{t\geq 0}$ has stationary increments and is $H$-self-similar.
Moreover, the characteristic function of its one-dimensional marginals is given by 
\begin{equation}\label{eq:char_fsm1}
\varphi_{Y^{(\delta,\rho,H)}_t}(p)=\eE\left[e^{ipY^{(\delta,\rho,H)}_t}\right]=e^{-t^{\delta H}\psi_{\delta,\rho_0}(p)},\qquad t\geq 0,\quad p\in \mathbb{R},
\end{equation}
for
\begin{equation}\label{eq:char_fsm2}
 \rho_0=\left\{ \begin{array}{cl} \rho,& H>1/\delta \\ \rho \; \frac{\frac{1}{H\delta} - \int_0^\infty \left(x^{H-1/\delta}-(1+x)^{H-1/\delta} \right)^{\delta} dx}{\frac{1}{H\delta} + \int_0^\infty \left(x^{H-1/\delta}-(1+x)^{H-1/\delta} \right)^{\delta} dx}, & H<1/\delta \end{array} \right.,
\end{equation}
which can be obtained from Proposition 3.4.1 in \cite{MR1280932} by elementary computations.

Let us denote by $L_{\delta,\rho}$ the pseudo-differential operator associated to the symbol $-\psi_{\delta,\rho}$ via \eqref{eq:L}. In the case $\rho=0$ of the symmetric fractional Laplacian,
we also write $\Delta^{\delta/2}:=L_{\delta,0}$
\begin{theorem}\label{thm:flm}
 Suppose that $k$  is homogeneous of degree $\beta-1$ for some $\beta\in(0,2)$ and $k(1,\cdot)\in L^{1+\varepsilon}((0,1))$ for some $\varepsilon>0$,  and that 
 $x\mapsto \hat \Phi(-x)$ is completely monotone on $(0,\infty)$, where $\hat \Phi$ is defined 
 in Theorem \ref{thm:hom}. Then:
 \\[0.2cm]
 (i) Let  $\gamma\in(\beta,2]$ and $\delta\in [\gamma,2]\setminus \{\gamma/\beta\}$.
 If $A_{\gamma/\delta}$ is a nonnegative random variable with Laplace transform 
 $\hat\Phi(-(\cdot)^{\gamma/\delta})$ and $\big(Y^{(\delta,0,\beta/\gamma)}_t\big)_{t\geq 0}$ is a symmetric linear fractional stable motion independent of $A_{\gamma/\delta}$, then $\left(A_{\gamma/\delta}^{1/\delta} Y^{(\delta,0,\beta/\gamma)}_t\right)_{t\geq 0}$ provides a stochastic solution to 
 \begin{align*}
u(t,x)&=u_0(x)+\int_0^t k(t,s) \Delta^{\gamma/2} u(s,x)ds,\qquad t>0,\quad x\in\cR, \\
\lim_{t\searrow 0} u(t,x)&=u_0(x),\qquad  x\in\cR.\nonumber
\end{align*}
(ii) Let $\beta\neq1$, $\delta\in (\beta,2]\setminus\{1\}$, and $\rho\in [-1,1]$. If
$A$ is a nonnegative random variable with Laplace transform 
 $\hat\Phi(-(\cdot))$ and $\big(Y^{(\delta,\rho,\beta/\delta)}_t\big)_{t\geq 0}$ is a linear fractional stable motion independent of $A$, then $\left(A^{1/\delta} Y^{(\delta,\rho,\beta/\delta)}_t\right)_{t\geq 0}$ provides a stochastic solution to 
 \begin{align*}
u(t,x)&=u_0(x)+\int_0^t k(t,s) L_{\delta,\rho_0}u(s,x)ds,\qquad t>0,\quad x\in\cR, \\
\lim_{t\searrow 0} u(t,x)&=u_0(x),\qquad  x\in\cR,\nonumber
\end{align*}
for 
$$
\rho_0=\left\{
\begin{array}{ll}
{\rho,} & \beta\in(1,2),\\
\rho \; \frac{\frac{1}{\beta} - \int_0^\infty \left(x^{(\beta-1)/\delta}-(1+x)^{(\beta-1)/\delta} \right)^{\delta} dx}{\frac{1}{\beta} + \int_0^\infty \left(x^{(\beta-1)/\delta}-(1+x)^{(\beta-1)/\delta} \right)^{\delta} dx}, & \beta\in(0,1).
\end{array}\right.
$$
\end{theorem}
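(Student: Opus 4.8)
The plan is to verify, in both cases, that the candidate process $X$ has one-dimensional marginal characteristic functions equal to $\Phi(t,-\psi(\cdot))$ for the relevant symbol $\psi$, and then to invoke Theorem~\ref{thm:generalTheorem}(i). Since, by Theorem~\ref{thm:hom}, homogeneity of $k$ of degree $\beta-1$ together with $k(1,\cdot)\in L^{1+\varepsilon}((0,1))$ already secures Assumptions~\ref{ass:r} and~\ref{ass:cont} as well as the product form $\Phi(t,\lambda)=\hat\Phi(\lambda t^\beta)$, the whole argument reduces to a characteristic-function computation once existence of the ingredients is checked. For the random variables I would note that $\hat\Phi(-\cdot)$ is completely monotone by hypothesis, so in (i) the composition $\hat\Phi(-(\cdot)^{\gamma/\delta})$ is completely monotone because $x\mapsto x^{\gamma/\delta}$ is a Bernstein function (here $\gamma/\delta\in(0,1]$ as $\delta\geq\gamma$), giving a legitimate Laplace transform for $A_{\gamma/\delta}$, while in (ii) one uses $\hat\Phi(-\cdot)$ itself. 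For the linear fractional stable motions I would check that the self-similarity index lies in the admissible range $(0,1)\setminus\{1/\delta\}$: in (i), $H=\beta/\gamma\in(0,1)$ because $\gamma>\beta$, and $H\neq1/\delta$ is exactly the excluded value $\delta\neq\gamma/\beta$; in (ii), $H=\beta/\delta\in(0,1)$ because $\delta>\beta$, and $H\neq1/\delta$ amounts to $\beta\neq1$. Finally $\psi_{\gamma,0}(0)=\psi_{\delta,\rho_0}(0)=0$ secures Assumption~\ref{ass:u_0+psi}.

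For part (i), conditioning on $A_{\gamma/\delta}=a$ and using the stable scaling $\psi_{\delta,0}(a^{1/\delta}p)=a|p|^\delta$ together with the characteristic function \eqref{eq:char_fsm1} of the symmetric ($\rho_0=0$) linear fractional stable motion with $H=\beta/\gamma$, I obtain $\mathbb{E}[e^{ipX_t}\mid A_{\gamma/\delta}=a]=e^{-t^{\delta\beta/\gamma}a|p|^\delta}$. Taking the expectation over $A_{\gamma/\delta}$ and inserting its Laplace transform $\hat\Phi(-(\cdot)^{\gamma/\delta})$ yields
$$
\varphi_{X_t}(p)=\hat\Phi\big(-(t^{\delta\beta/\gamma}|p|^\delta)^{\gamma/\delta}\big)=\hat\Phi(-t^\beta|p|^\gamma)=\Phi(t,-\psi_{\gamma,0}(p)),
$$
where the exponents collapse cleanly and the last equality is the product form of $\Phi$. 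Here all arguments of $\hat\Phi$ are real and nonnegative, so no analytic subtleties arise, and Theorem~\ref{thm:generalTheorem}(i) concludes, the symbol being that of $\Delta^{\gamma/2}=L_{\gamma,0}$.

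Part (ii) is structurally identical but uses the full skewed process $Y^{(\delta,\rho,\beta/\delta)}$ with matching stability index $\delta$; conditioning on $A=a$ and using the stable scaling $\psi_{\delta,\rho_0}(a^{1/\delta}p)=a\,\psi_{\delta,\rho_0}(p)$ (valid since $a\geq0$ preserves signs) gives $\mathbb{E}[e^{ipX_t}\mid A=a]=e^{-t^\beta a\,\psi_{\delta,\rho_0}(p)}$, with $\rho_0$ read off from \eqref{eq:char_fsm2} at $H=\beta/\delta$ (so $H\delta=\beta$), reproducing exactly the two cases $\beta>1$ and $\beta<1$ in the statement. The main obstacle is then that the argument $t^\beta\psi_{\delta,\rho_0}(p)$ of the Laplace transform of $A$ is genuinely complex whenever $\rho_0\neq0$, so the identity $\mathbb{E}[e^{-zA}]=\hat\Phi(-z)$, known a priori only for real $z\geq0$, must be extended to $\Re z\geq0$. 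I would resolve this by analytic continuation: $z\mapsto\mathbb{E}[e^{-zA}]$ is analytic on $\{\Re z>0\}$ (Laplace transform of a finite measure on $[0,\infty)$) and continuous up to the boundary, while $\hat\Phi$ is entire by Theorem~\ref{thm:hom} and Corollary~\ref{cor:corollary}; since they agree on $(0,\infty)$, the identity theorem forces agreement on the closed right half-plane. As $\Re(t^\beta\psi_{\delta,\rho_0}(p))=t^\beta|p|^\delta\geq0$, this gives $\varphi_{X_t}(p)=\hat\Phi(-t^\beta\psi_{\delta,\rho_0}(p))=\Phi(t,-\psi_{\delta,\rho_0}(p))$, and Theorem~\ref{thm:generalTheorem}(i) again concludes.
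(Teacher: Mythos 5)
Your proof is correct and follows essentially the same route as the paper: check that Theorem~\ref{thm:hom} supplies the hypotheses of Theorem~\ref{thm:generalTheorem}, compute the conditional characteristic function via the stable scaling and \eqref{eq:char_fsm1}--\eqref{eq:char_fsm2}, integrate out the independent random scale through its Laplace transform, and conclude with Theorem~\ref{thm:generalTheorem}(i). The only difference is that your analytic-continuation argument in part (ii) explicitly justifies evaluating the Laplace transform of $A$ at the complex argument $t^\beta\psi_{\delta,\rho_0}(p)$ (with $\Re\bigl(t^\beta\psi_{\delta,\rho_0}(p)\bigr)\geq 0$), a step the paper's proof uses implicitly without comment.
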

\begin{proof}
 (i) Let $H=\beta/\gamma$. In view of \eqref{eq:char_fsm1}--\eqref{eq:char_fsm2} and the independence assumption, we obtain
  \begin{eqnarray*}
 \eE\left[e^{ip A_{\gamma/\delta}^{1/\delta} Y^{(\delta,0,H)}_t}\right]&=&\eE\left[e^{-t^{\delta H}|p A_{\gamma/\delta}^{1/\delta} |^\delta}\right]= \eE\left[e^{-t^{\delta H}A_{\gamma/\delta}|p |^\delta }\right]=\hat \Phi(-(|p |^\delta t^{\delta H})^{\gamma/\delta})\\ &=& \hat \Phi(-|p|^\gamma t^\beta)
 =\Phi(t,-\psi_{\gamma,0}(p)),\qquad p\in \mathbb{R},\quad t\geq 0,
 \end{eqnarray*}
 applying Theorem \ref{thm:hom} for the last identity. The latter theorem also implies that 
 all assumptions of Theorem \ref{thm:generalTheorem} are satisfied.
 Thus, Theorem \ref{thm:generalTheorem}, (i), concludes the proof.
 \\[0.2cm]
 (ii) The proof is analogous to the one of part (i), 
  making use of the computation 
 \begin{eqnarray*}
  \eE\left[e^{ip A^{1/\delta} Y^{(\delta,\rho,H)}_t}\right]&=&
  \eE\left[e^{-t^{\delta H} \psi_{\delta,\rho_0}(p A^{1/\delta})} \right]=\eE\left[e^{-t^{\beta}A \psi_{\delta,\rho_0}(p)} \right]=\hat \Phi(-t^\beta \psi_{\delta,\rho_0}(p)) \\
  &=& \Phi(t, -\psi_{\delta,\rho_0}(p)),\qquad p\in \mathbb{R},\quad t\geq 0.
 \end{eqnarray*}
\end{proof}
\begin{remark}
 (i) If $\delta=\gamma/\beta$ in the setting of Theorem \ref{thm:flm}, (i), then a stochastic solution is provided by the process $(\mathcal{A}^{1/\delta}_\beta Y^{(\delta,0)}_t)_{t\geq 0}$, where the symmetric 
 stable L\'evy motion $ Y^{(\delta,0)}$ (with characteristic exponent $\psi_{\delta,0}$) is independent of the positive random variable $\mathcal{A}_\beta$ with Laplace transform $\hat\Phi(-(\cdot)^{\beta})$. The proof remains valid without any changes.
 \\[0.2cm]
 (ii) The name `linear fractional L\'evy motion' usually refers to a larger family of processes 
 which have stationary increments, feature $H$-self-similarity and have stable laws, see Definition 7.4.1 in
 \cite{MR1280932}. We here chose the parametrization corresponding to the Mandelbrot-Van Ness representation of fractional Brownian motion. We note that other parameter choices work equally well, but -- except in the  Gaussian case -- may lead to different processes which only have identical one-dimensional marginal distributions.
\end{remark}

We conclude the paper by a summarizing example, combining some of the results of Sections \ref{sec:special case} and \ref{sec:fsm}.
\begin{example}
 Suppose $k$ is a Saigo-Maeda kernel
 $$
 k(t,s):=\frac{a}{\Gamma(b/a)}(t^a-s^a)^{\frac{b}{a}-1}t^{a-\nu}s^{\nu-1} F_3\left(\frac{\nu}{a}-1,\frac{b}{a},1,\mu, \frac{b}{a}, 1-\left(\frac{s}{t}\right)^a,1-\left(\frac{t}{s}\right)^a\right)
 $$
 for parameters
 $b\in(0,2)$, $a\geq b$, $\mu\geq \frac{b}{a}-1$, $\nu>\max\left\{a-b,-a\mu   \right\}$. 
 For $\gamma \in (b,2]$ and $\delta \in [\gamma,2]\setminus \{\gamma/b\}$ denote by $A_{\gamma/\delta,a,b,\mu,\nu}$ a random variable with Laplace transform given in terms of the three-parameter Mittag-Leffler function by  $$[0,\infty)\rightarrow \mathbb{R},\quad x\mapsto\Gamma\left(\frac{\nu}{a}+\mu\right) E_{\frac{b}{a},\frac{\nu}{a}+\mu}^{1+\frac{\nu-a}{b}}(-x^{\gamma/\delta}),$$ which is a completely monotone function 
 by Remark \ref{rem:CM-3parameterMLF}. Let 
 $(Y^{(\delta,0,b/\gamma)}_t)_{t\geq 0}$ be a symmetric linear fractional stable motion independent of $A_{\gamma/\delta,a,b,\mu,\nu}$. Then, for every initial condition $u_0\in S(\cRd)$, the function
 $$
 u(t,x)=\eE\left[u_0\left( x+A_{\gamma/\delta,a,b,\mu,\nu}^{1/\delta} Y^{(\delta,0,b/\gamma)}_t\right)\right],\qquad t\geq, x\in \mathbb{R}
 $$
 is a solution to
  \begin{align*}
u(t,x)&=u_0(x)+\int_0^t k(t,s) \Delta^{\gamma/2} u(s,x)ds,\qquad t>0,\quad x\in\cR, \\
\lim_{t\searrow 0} u(t,x)&=u_0(x),\qquad  x\in\cR.\nonumber
\end{align*}
 As $\delta$ is not a parameter of the equation, we, thus, obtain a whole family of stochastic representations in terms of $b/\gamma$-self-similar processes with stationary increments parametrized by the index of stability $\delta\in [\gamma,2]$. Here, $\gamma$ is the order of the space derivative
 and $b-1$ the degree of homogeneity of the Saigo-Maeda kernel. This example extends the results of 
 \cite{MR3513003} beyond the Gaussian case ($\delta=2$) and the time-fractional case of order $b\in(0,1]$ ($a=\nu=1$, $\mu=0$), cp. Example \ref{ex:saigo_maeda}, (ii).
  
\end{example}



%

\end{document}